\newtheorem{theorem}{Theorem}[section]
\newtheorem{lemma}[theorem]{Lemma}
\newtheorem{proposition}[theorem]{Proposition}
\newtheorem{problem}[theorem]{Problem}
\theoremstyle{definition}
\newtheorem{definition}{Definition}[section]
\newtheorem{remark}{Remark}[section]
\numberwithin{equation}{section}
\newcommand{\B}{\mathbb B}
\newcommand{\R}{\mathbb R}
\newcommand{\K}{\mathcal K}
\newcommand{\U}{\mathcal U}
\renewcommand{\u}{\mathbf u}
\newcommand{\z}{\mathbf z}
\newcommand{\x}{\mathbf x}
\newcommand{\y}{\mathbf y}
\newcommand{\0}{\mathbf 0}
\newcommand{\tr}{\operatorname{tr}}
\newcommand{\rank}{\operatorname{rank}}
\newcommand{\dv}{\operatorname{div}}
\renewcommand{\mod}{\operatorname{mod}}
\newcommand{\adj}{\operatorname{adj}}
\title{The $\tau_N$-configurations and polyconvex gradient flows}
 \author{Baisheng Yan}
\address{Department of Mathematics\\ Michigan State University\\ East Lansing, MI 48824, USA}
   \email{\tt yanb@msu.edu}
\keywords{$\tau_N$-configurations, openness condition,  convex integration, polyconvex gradient flows, nowhere-$C^1$ Lipschitz weak solutions}
\subjclass[2010]{Primary 35K40, 35K51, 35D30. Secondary  35F50, 49A20}
\date{\today}
\begin{document}

 \begin{abstract} We study a generalization  of  $T_N$-configurations, called the $\tau_N$-configurations,   for constructing certain irregular solutions of some nonlinear diffusion systems by the  method of convex integration. We  construct some polyconvex functions  that support  a parametrized family of  $\tau_N$-configurations  satisfying a general openness condition; this will guarantee   the existence of   nowhere-$C^1$ Lipschitz weak solutions to the initial boundary value problems of the polyconvex gradient flows.  We elaborate on such  constructions and the subsequent verification of the openness condition when the dimension is at least 4 to avoid some complicated calculations that cannot be done by hand but would otherwise  be needed for dimensions 2 and 3.  
     \end{abstract}

\maketitle


\section{Introduction}
Let $m,n\ge 1$ and $\R^{m\times n}$ be the usual space of real $m\times n$  matrices with inner-product $\langle A,B\rangle=\tr(A^TB).$ Given a bounded  domain $\Omega\subset\R^n$, a number $T>0$ and a continuous function $\sigma\colon \R^{m\times n}\to \R^{m\times n},$ we study the time-dependent diffusion  system: 
\begin{equation}\label{DE}
 \partial_t u=\dv \sigma(Du )   \quad  \mbox{in $\Omega_T=\Omega \times (0,T),$}
\end{equation}
for the unknown $u=(u^1,\dots,u^m)\colon\Omega_T\to \R^m$, where
$\partial_t u=(\partial_t u^1,\cdots, \partial_t u^m) $ and $Du=\big(\frac{\partial u^i}{\partial {x_j}}\big) $ are the time-derivative and space-gradient of $u,$ respectively.  
We say that  a function $
 u\in L_{loc}^1(0,T; W_{loc}^{1,1}(\Omega;\R^m))$ is a (very) {\em weak solution} of  (\ref{DE}) provided that $\sigma(Du)\in L_{loc}^1(\Omega_T;\R^{m\times n})$  and
\begin{equation}\label{weak-sol}
\int_{\Omega_T} \Big ( u \cdot \partial_t  \varphi  - \langle \sigma(D u), D\varphi \rangle \Big ) dxdt =0\quad \forall\, \varphi\in  C^\infty_c(\Omega_T;\R^m).
\end{equation}

The system  (\ref{DE}) is  called (uniformly) {\it parabolic}  if $\sigma$ satisfies  a uniform rank-one monotonicity (also known as Legendre-Hadamard ellipticity) condition:  
 \begin{equation}\label{r-mono}
 \langle \sigma(A+p\otimes a) -\sigma(A), \, p\otimes a\rangle \ge \nu |p|^2|a|^2 
  \end{equation}
for all $A\in \R^{m\times n},\, p\in\R^m $ and $a\in \R^n,$ where  $\nu>0$ is a constant.  However, it is known that condition (\ref{r-mono})  is not enough for any feasible theory of existence and regularity for system (\ref{DE}); further stronger structural conditions would be needed. For example, under a stronger  condition known as {\em quasimonotonicity}, the best regularity  result for weak solutions of    (\ref{DE}) is  the {\it partial $C^{1,\alpha}$-regularity} in space:  $Du\in C_{loc}^{\alpha,\alpha/2}(U;\R^{m\times n})$ for some $0<\alpha<1$ and  open set $U\subset\Omega_T$ with $|\Omega_T\setminus U|=0$ (see, e.g., \cite{BDM13}). 
 
We address the issues on counterexamples to uniqueness and regularity for system (\ref{DE}).  To construct such counterexamples,  a general approach  is to reformulate  system (\ref{DE})  as a first-order  partial differential relation: 
 \begin{equation}\label{pdr1}
u= \dv v,\;\;  (Du,\partial_t v)\in \K \;\; \mbox{a.e. on $\Omega_T$}
 \end{equation}
 for functions  $(u,v)\colon\Omega_T\to\R^m\times   \R^{m\times n},$    where 
\[
\K  =\{(A,\sigma(A)):A\in \R^{m\times n}\} 
\]
is the graph of $\sigma$ in $\R^{m\times n}\times \R^{m\times n},$ and then apply the method of convex integration. Such an approach has been successful  if $\sigma$ satisfies some non-monotonicity conditions in the scalar cases ($m=1$) as studied in \cite{KY15, KY17, KY18,  Zh06a, Zh06b} or if $\sigma$ satisfies even some conditions much stronger than the rank-one monotonicity in the system cases  ($m,n\ge 2$) as studied in \cite{Ya20, Ya22}.  Earlier work on the corresponding elliptic systems and certain  time-dependent solutions of parabolic systems has been pioneered  in \cite{MRS05, MSv03, Sz04} using the convex integration method. We remark that the convex integration method has  recently found remarkable success  in  studying many other important PDE problems,  such as  the incompressible Euler  and Navier-Stokes equations  \cite{BV19, DLSz09,  DLSz17,  Is18}, the porous medium equations \cite{CFG11},   the  Monge-Amp\`ere equations \cite{LP17},    the active scalar equations \cite{Sh11},  and most recently, the wild solutions to elliptic equations including the $p$-Laplacian equation  \cite{CT22, Jo23}, to just list a few. 
 
In this paper, we focus on the systems given by $\sigma(A)=DF(A)$ with {\it strongly polyconvex} functions $F\colon \R^{m\times n}\to\R$  in the sense that  
\begin{equation} 
F(A)=\frac{\nu}{2}|A|^2 +G(\Gamma (A)),
\end{equation}
where  $\nu>0$, $\Gamma (A) \in \R^r$ is an ordered $r$-tuple of certain subdeterminants of $A,$ and $G\colon\R^{r} \to\R$ is a smooth convex function. For such a function $F,$ condition (\ref{r-mono}) is automatically satisfied for $\sigma=DF,$ and  system (\ref{DE}) becomes the $L^2$-gradient flow of energy functional $I(u)=\int_\Omega F(Du(x))\,dx.$

Some basic structures of the set $\K$ that are useful for the convex integration of (\ref{pdr1}) can be elucidated  by the rank-one matrices in  the full space-time  gradient space of $(u,v).$   However,  condition (\ref{r-mono}) rules out nontrivial rank-one connections in a  full gradient set that contains $\K$ as its diagonal projection, and thus other rank-one convex hull structures are needed.  Indeed, as in \cite{Ya20}, we consider  the  {\it $T_N$-configurations} in the full  gradient space and then project them onto the diagonal to obtain the so-called {\em $\tau_N$-configurations} in the space of $\K.$ 
We remark that the $T_4$-configuration was discovered independently by several authors in different contexts and  was used in \cite{Ta93} to study  separately convex functions. The $T_4$-configuration  and its generalization of $T_N$-configurations (see \cite{KMS03}) have been greatly explored and applied in  constructing wild solutions for elliptic systems in \cite{MSv03, Sz04} and for parabolic systems in  \cite{MRS05, Ya20}.  

 Based on the work \cite{Ya20}, we introduce a structural condition, called {\em Condition $(OC)_N$} (see Definition \ref{OC-N}), which requires that certain parametrized family of $\tau_N$-configurations be supported on the set $\K$  and satisfy some openness conditions.  It turns out that this condition is sufficient for constructing nowhere-$C^1$ Lipschitz weak solutions of system (\ref{DE}); the proof of such a sufficiency will be published elsewhere  \cite{Ya22}.
For the counterexamples concerning polyconvex gradient flows, the question is reduced to a system with two equations; i.e., $m=2.$ (See Section 3.) We present a general  parametrization for some special  $\tau_N$-configurations in $\R^{2\times n}\times \R^{2\times n}$ and show that the graph of $DF$ for a polyconvex function $F$ can support such special $\tau_N$-configurations easily when $n\ge 4.$ (See Section 4.)  The result is also true when $n=2$ (see \cite{Ya20}), but for $n=3$ some more complicated calculations are necessary that cannot be done by hand. Finally we elaborate on the main steps toward   the verification of Condition $(OC)_N$,  including  the dimension $n=2.$ (See Section 5.)   The only difficulties encountered would be the verification of several rational or analytic functions being not identically zero; these verifications may require some necessary calculations.

 \section{The $\tau_N$-configurations and Condition $(OC)_N$} \label{s2}

We first recall the definition of $\tau_N$-configurations introduced in \cite[Definition 3.1]{Ya20}.
 \begin{definition}\label{tau-N}
 Let $N>1.$ An  ordered $N$-tuple $(\xi_1,\xi_2,\dots,\xi_N)$ in $\R^{m\times n}\times \R^{m\times n}$ is called a  {\it $\tau_N$-configuration}  provided that there exist $\rho\in \R^{m\times n}\times \R^{m\times n}$, $\kappa_i>1$ and $\gamma_i =(p_i\otimes  a_i, s_iB_i),$ for $ i=1,\cdots,N,$ such that
\begin{equation}\label{t-N}
  \begin{cases}
\xi_1  =   \rho+\kappa_1\gamma_1,\\
\xi_2   =  \rho+\gamma_1+\kappa_2\gamma_2,\\
\quad \quad  \vdots   \\
\xi_N   =  \rho+\gamma_1+\cdots+\gamma_{N-1}+\kappa_N\gamma_N,
\end{cases}
\end{equation}
where  $p_i \in\R^m$, $a_i  \in \R^n$, $s_i\in\R$ and $B_i\in \R^{m\times n}$ satisfy 
 \begin{equation}\label{sum-0}
\begin{cases}  B_i a_i=0,\;\; (|p_i|+|B_i|) (| a_i|+|s_i|) \ne 0   \quad \forall\,i=1, \cdots,N, \\[1ex]
 \sum\limits_{i=1}^N p_i\otimes  a_i =0, \;\;   \sum\limits_{i=1}^N B_i\otimes  a_i=0,\;\;   \sum\limits_{i=1}^N s_ip_i =0,\;\; \sum\limits_{i=1}^N s_i B_i =0.\end{cases}
\end{equation} 

Let  $\pi_1=\rho,\; \pi_i=\rho+\gamma_1+\cdots+\gamma_{i-1}$ for $i=2,\cdots, N,$ and $\chi_i=\frac{1}{\kappa_i}\in (0,1).$  Then, in terms of $\chi_i$, $\xi_i$ and $\pi_i$, the $\tau_N$-configuration  (\ref{t-N})  is equivalent to   
\begin{equation}\label{t-N-2}
\begin{cases}
\pi_{i+1}-\pi_i=\gamma_i,\\
 \pi_{i+1}=  \chi_i \xi_i+(1-\chi_i)\pi_i  \end{cases} 
 \quad\forall\, i \; \mod N,
 \end{equation}
where $\gamma_i=(p_i\otimes  a_i, s_iB_i)$ with $\{(p_i,a_i,s_i,B_i)\}_{i=1}^N$ satisfying (\ref{sum-0}).  \\
(See Figure \ref{fig0} for an illustration of a $\tau_5$-configuration.)  

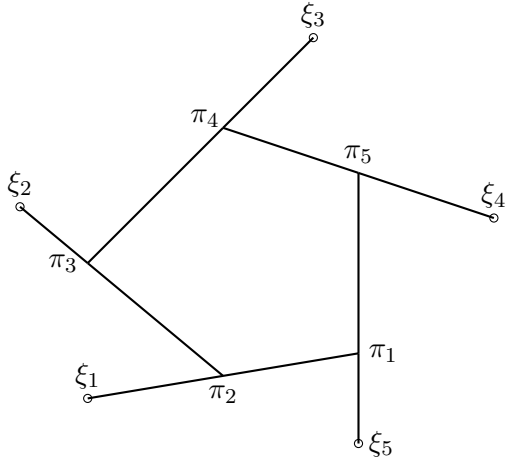
\begin{figure}[ht]
\begin{center}
\begin{tikzpicture}[scale =0.6]
\draw[thick] (-5,-2)--(1,-1);
  \draw(-5,-2) node[above]{$\xi_1$};
   \draw(1,-1) node[right]{$\pi_1$};
     \draw[thick] (1,-1)--(1,3);
  \draw[thick] (1,-1)--(1,-3);
  \draw[thick] (-2,-1.5)--(-5,1);
   \draw[thick] (-6.5,2.25)--(-5,1);
   \draw[thick] (-5,1)--(-2,4);
     \draw[thick] (0,6)--(-2,4);
    \draw[thick] (-2,4)--(1,3);
       \draw[thick] (1,3)--(4,2);
     \draw(1,-3) node[right]{$\xi_5$};
      \draw(1,3) node[above]{$\pi_5$};
        \draw(-5,1) node[left]{$\pi_3$};
         \draw(-2,-1.5) node[below]{$\pi_2$};
               \draw(-2.4,3.8) node[above]{$\pi_4$};
    \draw(4,2) node[above]{$\xi_{4}$};
   \draw(-6.5,2.25) node[above]{$\xi_2$};
     \draw(0,6) node[above]{$\xi_3$};
       \draw (-5,-2) circle (0.09);
        \draw(4,2) circle (0.09);
         \draw(1,-3) circle (0.09);
          \draw(-6.5,2.25) circle (0.09);
           \draw(0,6) circle (0.09);
\end{tikzpicture}
\end{center}
\caption{An illustration of a $\tau_5$-configuration $(\xi_1,\cdots,\xi_5)$}
 \label{fig0}
\end{figure}
 \end{definition}

 We now define the  Condition $(OC)_N$  as a detailed refinement  of Condition (OC)  introduced in \cite[Definition 3.2]{Ya20}.
  
 \begin{definition}\label{OC-N}  Let    $\sigma\colon \R^{m\times n}\to \R^{m\times n}$  and 
$
 \K=\{(A,\sigma(A)):A\in \R^{m\times n}\}
$
 be the graph of $\sigma$ in $\R^{m\times n}\times \R^{m\times n}.$   Given $N>1,$ we say that $\sigma$ satisfies  {\it Condition $(OC)_N$} provided that there exist smooth  functions
\[
\chi_i\colon \bar \B_r(0) \to [\nu_0,\nu_1],\quad \xi_i\colon \bar \B_r(0)   \to \K,\quad  \pi_i\colon \bar \B_r(0) \to \R^{m\times n}\times \R^{m\times n} 
\]
for $i=1,\cdots,N,$ where $\bar \B_r(0)$ is a closed ball in  $\R^{m\times n}\times \R^{m\times n}$ and $0<\nu_0<\nu_1<1$ are some numbers, with the following properties:
 \begin{itemize}
 \item[\bf (P1)] $\pi_1(\rho)=\rho$ for all $\rho\in \bar \B_r(0)$,  $\pi_i(\B_r(0))$ is open  for $1\le i\le N$, and
\[
 \xi^1_i(\bar \B_r(0))\cap   \xi^1_j(\bar \B_r(0))=  \pi^1_i(\bar \B_r(0))\cap  \pi^1_j(\bar \B_r(0))=\emptyset \quad \forall\, i\ne j,
 \]
where $\eta^1\in \R^{m\times n}$ is the projection of  $\eta=(\eta^1,\eta^2)\in\R^{m\times n}\times \R^{m\times n}.$   
\item[\bf (P2)] There exist smooth functions $p_i(\rho)\in \R^m$, $a_i(\rho)\in  \R^n$, $s_i(\rho)\in \R$ and $B_i(\rho)\in \R^{m\times n}$ on $\rho\in\bar \B_r(0),$ for  $i=1,\cdots, N,$ such that  condition (\ref{sum-0}) is satisfied point-wise on $\rho\in\bar \B_r(0)$ and, for $\gamma_i(\rho)= (p_i(\rho)\otimes  a_i(\rho),\, s_i(\rho) B_i(\rho))$ and $\rho\in\bar \B(0),$ it follows that 
\[
\begin{cases}
 \pi_{i+1}(\rho)-\pi_i(\rho)=\gamma_i(\rho),\\
 \pi_{i+1}(\rho)= \chi_i(\rho) \xi_i(\rho)+(1-\chi_i(\rho))\pi_i(\rho)  \end{cases} 
 \; \forall\, i\; \mod N.
 \]

 \item[\bf (P3)] For $0\le \lambda\le 1$ and $1\le i\le N,$ define the subsets in $\R^{m\times n}\times \R^{m\times n}$:
\[
\begin{cases}
S_i(\lambda)   =\{\lambda \xi_i(\rho) +(1-\lambda)\pi_i(\rho): \rho\in \B_r(0)\}, \\
K(\lambda)  =  \cup_{i=1}^N S_i(\lambda),  \\
 \Sigma(0)=K(0), \quad 
 \Sigma(\lambda) = \cup_{0\le \lambda'<\lambda} K(\lambda') \quad\forall\, 0<\lambda\le 1.
 \end{cases}
\]
 Then  $\Sigma(\lambda)$ is open for all $\lambda\in [0,1];$ 
moreover, there is a number  $ \delta_1\in (0,1)$ such that
 $\{S_i(\lambda)\}_{i=1}^N $ is a family of disjoint  open sets   for each $\lambda\in [\delta_1,1).$  
\end{itemize}  
\end{definition}

We state the following  theorem, which guarantees the existence of  nowhere-$C^1$   Lipschitz  weak solutions to the initial boundary value problem for system (\ref{DE}) under the assumption of Condition $(OC)_N.$ The proof of this  theorem will be given elsewhere; see \cite{Ya22} for  the case $m=n=2.$   
 
 \begin{theorem}\label{mainthm1}  Let  $\sigma$ be locally Lipschitz and satisfy  Condition $(OC)_N$  for some $N>1.$ Then for any $(\bar u, \bar v)\in C^1(\bar \Omega_T;\R^m\times\R^{m\times n})$ satisfying  
\begin{equation}\label{subs}
\bar u=\dv \bar v, \quad (D\bar u,\partial_t \bar v)\in \Sigma(\bar \lambda) \;\;\mbox{  
 on $\bar \Omega_T,$}
 \end{equation}
where $0< \bar \lambda<1,$ and  $\delta>0,$  the {\em full  Dirichlet   problem}
\begin{equation}\label{ibvp-5}
\begin{cases}  \partial_t u=\dv \sigma(Du )   \;\; \mbox{\rm on $\Omega_T,$}
\\
 u  |_{\partial\Omega_T}=\bar u 
\end{cases}
\end{equation}  possesses a  Lipschitz weak solution   $u$ with
$\|u-\bar u\|_{L^\infty}+\|\partial_t u-\partial_t \bar u\|_{L^\infty} <\delta$ such that $Du$ is not  essentially  continuous at any point of $\Omega_T.$
 \end{theorem}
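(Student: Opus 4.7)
The plan is to prove Theorem \ref{mainthm1} by a Baire category convex integration argument in the spirit of \cite{MSv03, MRS05, Ya20}, adapted to the parabolic, divergence-constrained setting. First I would introduce the admissible class $\mathcal{A}$ of pairs $(u,v)\in C^1(\bar\Omega_T;\R^m\times\R^{m\times n})$ satisfying $u=\dv v$ pointwise, $u=\bar u$ on $\partial\Omega_T$, the quantitative closeness $\|u-\bar u\|_{L^\infty}+\|\partial_t u-\partial_t \bar u\|_{L^\infty}<\delta$, and the strict inclusion $(Du,\partial_t v)(x,t)\in\Sigma(\bar\lambda)$ throughout $\bar\Omega_T$. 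The hypothesis \eqref{subs} places $(\bar u,\bar v)\in\mathcal{A}$, and the openness of $\Sigma(\bar\lambda)$ from (P3) provides the stability needed to carry out a perturbation. Let $X$ denote the closure of $\mathcal{A}$ in the $C^0$ topology; this is a complete metric space on which the Baire category theorem applies.

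On $X$ one introduces the energy-gap functional $f(u,v)=\int_{\Omega_T}\dist^2\bigl((Du,\partial_t v),\K\bigr)\,dxdt$, with gradients of limit elements interpreted distributionally. Standard arguments show that $f$ is of Baire class one, so its continuity points form a residual set $\mathcal{R}\subset X$. The heart of the proof, and its main obstacle, is the perturbation lemma: for any $(u,v)\in\mathcal{A}$ with $f(u,v)\ge\eta>0$ and any $\varepsilon>0$, produce $(u',v')\in\mathcal{A}$ with $\|(u',v')-(u,v)\|_{C^0}<\varepsilon$ and $f(u',v')\le f(u,v)-c(\eta)$, where $c(\eta)>0$ is independent of $\varepsilon$. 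I would carry this out by partitioning $\Omega_T$ into small parabolic cubes on which $(Du,\partial_t v)$ is approximately constant $\zeta=\chi_i(\rho)\xi_i(\rho)+(1-\chi_i(\rho))\pi_i(\rho)\in S_i(\chi_i(\rho))$, and on each cube adding a high-frequency, small-amplitude oscillation in the direction $\gamma_i(\rho)=(p_i\otimes a_i,\,s_iB_i)$. The identity $B_ia_i=0$ is exactly what renders a plane-wave ansatz compatible with the constraint $u=\dv v$, and the loop relations in \eqref{sum-0} allow the oscillation to be realized as a compactly supported potential. After perturbation, the space-time gradient splits between a neighborhood of $\xi_i(\rho)\in\K$ and a neighborhood of $\pi_i(\rho)\in\Sigma(\lambda')$ with $\lambda'<\bar\lambda$; openness of $\Sigma(\bar\lambda)$ and the disjointness of $\{S_i(\lambda)\}$ from (P1)--(P3) keep the perturbed pair inside $\mathcal{A}$, while the geometry of the split produces a per-cube decrease of $\dist^2(\cdot,\K)$ summing to the positive constant $c(\eta)$.

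A standard Baire iteration then forces $f\equiv 0$ on $\mathcal{R}$, so every $(u,v)\in\mathcal{R}$ satisfies $(Du,\partial_t v)\in\K$ a.e.; combined with $u=\dv v$ and the local Lipschitz continuity of $\sigma$, this yields a genuine Lipschitz weak solution of \eqref{ibvp-5} in the sense of \eqref{weak-sol}. For the nowhere-$C^1$ conclusion I would superimpose a second Baire argument: for each $(z,\tau)$ in a countable dense subset of $\Omega_T$ and each $k,j\in\N$, the set of $(u,v)\in X$ for which $\essosc_{B_{1/j}(z,\tau)\cap\Omega_T}(Du)>1/k$ is open and dense in $X$, since the same cube-wise oscillation scheme can insert the required fluctuation at any prescribed point and scale. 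Intersecting over all such $(z,\tau,k,j)$ produces a residual set within which $Du$ has positive essential oscillation on every ball centered at a dense set of points, hence is not essentially continuous at any point of $\Omega_T$; any $(u,v)$ in the intersection of this set with $\mathcal{R}$ furnishes the desired solution. The hardest technical step throughout is the construction of the localized, divergence-compatible oscillation in the parabolic setting, and this is exactly what the four sum identities in \eqref{sum-0} together with $B_ia_i=0$, the algebraic core of Condition $(OC)_N$, are engineered to enable.
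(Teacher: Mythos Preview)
The paper does not actually prove Theorem~\ref{mainthm1}. Immediately after stating it, the author writes that ``the proof of this theorem will be given elsewhere; see \cite{Ya22} for the case $m=n=2$.'' So there is no proof in the paper to compare your proposal against; the theorem is quoted as a black box, and the remainder of the paper is devoted to constructing polyconvex integrands $F$ for which $\sigma=DF$ satisfies Condition~$(OC)_N$, not to deriving the existence result from that condition.

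That said, your outline is the standard Baire-category convex integration scheme in the lineage of \cite{MSv03,MRS05,Ya20}, and it is presumably close in spirit to what \cite{Ya22} carries out. A few points would need tightening if you actually wrote this up. First, an element of $\Sigma(\bar\lambda)$ lies in some $S_i(\lambda')$ with $\lambda'<\bar\lambda$, i.e.\ it equals $\lambda'\xi_i(\rho)+(1-\lambda')\pi_i(\rho)$ for some $\rho$ and some $\lambda'$, not specifically with $\lambda'=\chi_i(\rho)$; the oscillation step then pushes the gradient along the segment joining $\xi_i(\rho)$ and $\pi_i(\rho)$ (equivalently, in the $\gamma_i(\rho)$ direction), with one piece landing closer to $\xi_i(\rho)\in\K$ and the other landing at a smaller $\lambda''<\lambda'$, still inside $\Sigma(\bar\lambda)$ by the nesting built into~(P3). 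Second, the compactly supported potential you allude to is not produced directly from the loop relations in \eqref{sum-0}; rather, one builds a single-direction plane-wave correction compatible with $u=\dv v$ via $B_ia_i=0$ and the particular space--time rank-one structure $(p_i\otimes a_i,\,s_iB_i)$, and localizes it with a cutoff whose gradient error is absorbed by the openness of $\Sigma(\bar\lambda)$. The closed-loop sums in \eqref{sum-0} are what guarantee that the $\pi_i$'s themselves form a cycle, so that repeated oscillation steps can drive the gradient around the $\tau_N$-configuration and hence arbitrarily close to $\K$. These are refinements rather than gaps; the overall architecture you describe is correct.
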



 \section{Counterexamples for polyconvex gradient flows}   
 
 We consider the following problem concerning nowhere-$C^1$ Lipschitz solutions to the initial boundary value problem of polyconvex gradient flows.  
 
 \begin{problem}\label{mainthm0}
Given $m,n\ge 2$, find a function  $F\colon \R^{m\times n}\to \R$ of the form
\begin{equation}\label{poly}
F(A)=\frac{\nu}{2}|A|^2 +G(\Gamma (A)),
\end{equation}
where $\nu>0$, $\Gamma (A) \in \R^r$ is an ordered $r$-tuple  of certain subdeterminants of $A,$ and $G\colon\R^{r} \to\R$ is a smooth convex function,  such that  the initial boundary value problem 
\begin{equation}\label{ibvp}
\begin{cases}  \partial_t  u=\dv DF(D u )   \;\; \mbox{in $\Omega_T,$}
\\
 u  |_{\partial' \Omega_T}=0,  
 \end{cases}
\end{equation}  
where $\partial'\Omega_T=(\Omega\times \{0\})\cup (\partial \Omega \times (0,T))$ is  the parabolic boundary of $\Omega_T,$ 
 possesses  infinitely many  Lipschitz weak solutions for which $Du$ is nowhere essentially   continuous  in $\Omega_T.$  
\end{problem}

Problem \ref{mainthm0} can be reduced to the case   $m=2$ as follows. 
 Let  $m>2.$ Consider 
\[
F(A)=\frac{\nu}{2}|A|^2 +G(A_1, J(A_1)) \quad \forall\,A\in \R^{m\times n},
\]
where $A_1\in \R^{2\times n}$ is the submatrix of first two rows of $A$ and  $J(A_1) \in \R^r$ is  an ordered $r$-tuple of  certain $2\times 2$ subdeterminants  of $A_1.$  Then
\[
DF(A)=\nu A +\begin{bmatrix} G_{A_1}(\tilde A_1)\\O\end{bmatrix} + \begin{bmatrix} G_{J}(\tilde A_1)DJ(A_1)
\\O\end{bmatrix},
\]
where $\tilde A_1=(A_1, J(A_1)).$ Let 
\[
\tilde F(A_1)=\frac{\nu}{2}|A_1|^2 +G(A_1, J(A_1)) \quad \forall\,A_1\in \R^{2\times n}.
\]
Consider functions  $u=(\tilde u,0,\cdots,0),$ where $\tilde u=(u^1,u^2)\colon \Omega_T\to \R^2.$ We have 
\[
\partial_t u=\begin{bmatrix} \partial_t\tilde u\\0\end{bmatrix},\quad Du=\begin{bmatrix} D \tilde u\\O\end{bmatrix},\quad DF(Du)=\begin{bmatrix} D\tilde F(D\tilde u)\\O\end{bmatrix}.
\]
For such functions, problem (\ref{ibvp}) is   equivalent to the zero initial boundary value problem for system 
$\partial_t \tilde u =\dv D\tilde F(D\tilde u).$ This reduces the question to the case  $m=2.$
However, there seems no way to reduce the dimension $n$. 

In what follows, we  consider functions $F\colon \R^{2\times n}\to\R$ of the form
\begin{equation}\label{F0}
F(A)=\frac{\epsilon}{2}|A|^2+G(A,J(A)) \quad\forall \, A\in \R^{2\times n},
\end{equation}
where $\epsilon>0$, $J(A)\in \R^d$  is an ordered $d$-tuple of {\it all} $2\times 2$ subdeterminants  of $A,$ with $d= {n(n-1)}/{2}$, and $G\colon \R^{2\times n}\times \R^d\to\R$ is a smooth function. Thus 
\[
DF(A)=\epsilon A+ G_A(\tilde A)+G_J(\tilde A) DJ(A),
\]
where  $\tilde A= (A,J(A)).$

By virtue of Theorem \ref{mainthm1}, the resolution of Problem \ref{mainthm0} relies on constructing  functions $F\colon \R^{2\times n}\to\R$ of the form
(\ref{F0}) with smooth convex $G\colon \R^{2\times n}\times \R^d\to\R$ such that $\sigma=DF$ satisfies Condition $(OC)_N.$   
 
 \section{Polyconvex functions  supporting special $\tau_N$-configurations} 
 In this section, we construct a polyconvex  function  $F_0\colon \R^{2\times n}\to\R$ of the form
(\ref{F0}) with  smooth convex $G\colon \R^{2\times n}\times \R^d\to\R$ such that the graph $\K_{F_0}$ of $\sigma=DF_0$ contains a special $\tau_N$-configuration for some $N>1.$

 \subsection{Parametrizing some special $\tau_N$-configurations}  We consider some special  $\tau_N$-configurations  in $\R^{2\times n}\times \R^{2\times n}.$ 
 
 First, we  modify the second condition in the first line of condition  (\ref{sum-0}) by considering the elements $\{(p_i, a_i,B_i, s_i)\}_{i=1}^N$ in $\R^2\times \R^n\times \R^{2\times n}\times \R$  satisfying   
 \begin{equation}\label{sum-1}
\begin{cases}  B_i a_i=0,\;\;  |p_i| | a_i| \ne 0   \quad \forall\,i=1, \cdots,N, \\[1ex]
 \sum\limits_{i=1}^N p_i\otimes  a_i =0, \;\;   \sum\limits_{i=1}^N B_i\otimes  a_i=0,\\
   \sum\limits_{i=1}^N s_ip_i =0,\;\; \sum\limits_{i=1}^N s_i B_i =0.\end{cases}
\end{equation} 
Second, we observe  that  if $s_i=a_i\cdot q$ for all $i=1, \cdots,N,$ where $q\in\R^n$ is a fixed nonzero vector,  then the third-lined conditions   of (\ref{sum-1}) will follow automatically   from the second-lined conditions. So, we will focus on such special $\tau_N$-configurations.

To explicitly parametrize the first-lined and second-lined conditions  of (\ref{sum-1}), we define for all $r=1,2,\cdots,n$ and $\x, \, \y,\,\z\in\R^{n-1},$  
\begin{equation}
\begin{cases}
   \alpha_r(\x)   =(x_1,\cdots,x_{r-1},1,x_r,\cdots,x_{n-1})^T\in \R^n,\\
  b_r(\x,\y)   =(y_1,\cdots,y_{r-1},\,-(\x\cdot \y)\,,y_r,\cdots,y_{n-1})\in (\R^n)^T,\\
  \beta_r(\x,\y,\z)   =\begin{bmatrix} b_r(\x,\y)\\b_r(\x,\z)\end{bmatrix},
\end{cases}
\end{equation}
 where $\x=(x_1,\cdots,x_{n-1}),$ etc. Then 
 \[
 \beta_r(\x,\y,\z)\alpha_r(\x)=0 \quad \forall\, \x,\y,\z\in\R^{n-1},\;\; r=1, \cdots,n. 
 \]

Assume $N\ge n+3$  and let $r_1,\cdots,r_N\in \{1, \cdots,n\}$ be such that 
\[
r_i=i \, \mod n\quad \forall\, i=1,\cdots,N.
\]
Given $\x_i,\y_i,\z_i\in\R^{n-1}$ and $p_i\in \R^2$ for  $i=1, \cdots,N,$  let
   \[
a_i=\alpha_{r_i}(\x_i), \quad B_i=\beta_{r_i}(\x_i,\y_i,\z_i).
\]
 For such $a_i$ and $B_i$, we write the second-lined conditions of (\ref{sum-1})  as  
 \begin{equation}\label{sum-2}
 \sum_{i=1}^n p_i\otimes  a_i =- \sum_{j=n+1}^N p_j\otimes  a_j, \quad \sum_{i=1}^{n+1} B_i\otimes  a_i=-\sum_{j=n+2}^N B_j\otimes  a_j.
\end{equation}
Define 
\begin{equation}\label{det-1}
\Delta (\x_1,\cdots,\x_n)=\det [\alpha_1(\x_1)\; \alpha_2(\x_2)\; \cdots\; \alpha_n(\x_n)],
\end{equation}
which is a nonzero polynomial in $(\x_1,\cdots,\x_n).$
If $\Delta (\x_1,\cdots,\x_n)\ne 0,$ then from the first equation of (\ref{sum-2}), we can solve for $p_1,\cdots,p_n$ to obtain
\begin{equation}\label{p-1}
p_i=\sum_{j=n+1}^N \frac{S_{ij}(\x_1,\cdots,\x_N)}{\Delta(\x_1,\cdots,\x_n)}p_j \quad \forall\,i=1, \cdots,n,
\end{equation}
where all the coefficients $S_{ij}(\x_1,\cdots,\x_N)$  are  polynomials of $(\x_1,\cdots,\x_N).$ 

Since $B_i\otimes a_i=\begin{pmatrix} b_{r_i}(\x_i,\y_i)\otimes \alpha_{r_i}(\x_i)\\ b_{r_i}(\x_i,\z_i)\otimes \alpha_{r_i}(\x_i)\end{pmatrix} $ 
and  $b_{r_i}(\x_i,\y_i) \alpha_{r_i}(\x_i)=0$, we have  
\begin{equation}\label{trace}
(b_{r_i}(\x_i,\y_i)\otimes \alpha_{r_i}(\x_i))_{11}=-\sum_{j=2}^n (b_{r_i}(\x_i,\y_i)\otimes \alpha_{r_i}(\x_i))_{jj} 
\end{equation}
for all $ i=1,\cdots,N;$  the same  holds for $(b_{r_i}(\x_i,\z_i)\otimes \alpha_{r_i}(\x_i))_{11}.$ The second equation of (\ref{sum-2}) is equivalent to two sets of equations:
\begin{equation}
  \sum_{i=1}^{n+1} b_{r_i}(\x_i,\y_i)\otimes \alpha_{r_i}(\x_i) =-\sum_{j=n+2}^N b_{r_j}(\x_j,\y_j)\otimes \alpha_{r_j}(\x_j), \label{eq-y}
  \end{equation}
  \begin{equation}
 \sum_{i=1}^{n+1} b_{r_i}(\x_i,\z_i)\otimes \alpha_{r_i}(\x_i) =-\sum_{j=n+2}^N b_{r_j}(\x_j,\z_j)\otimes \alpha_{r_j}(\x_j).\label{eq-z}
\end{equation}
Because of  (\ref{trace}), both (\ref{eq-y}) and (\ref{eq-z}) are the same linear system of $(n^2-1)$ equations for $(n+1)(n-1)=(n^2-1)$ variables $\{\y_1,\cdots,\y_{n+1}\}$ and $\{\z_1,\cdots,\z_{n+1}\},$  respectively.   If we order $\{\y_1,\cdots,\y_{n+1}\}$ as 
\[
\y'=(y_{11}, \cdots,y_{1(n-1)};\, y_{21}, \cdots,y_{2(n-1)};\, \cdots; \, y_{(n+1)1}, \cdots,y_{(n+1)(n-1)})
\]
as a vector in $ \R^{n^2-1},$ then the coefficient matrix $A$ of  system (\ref{eq-y})  is a  square   $(n^2-1)\times (n^2-1)$ matrix  $ A=A(\x_1,\x_2,\cdots,\x_{n+1})$ whose  entries are all polynomials of $(\x_1,\x_2,\cdots,\x_{n+1})$ of degree at most  2.  Define
\begin{equation}\label{det-2}
T(\x_1,\cdots,\x_{n+1})=\det A (\x_1,\cdots,\x_{n+1}),
\end{equation}
which is a nonzero polynomial in $(\x_1,\cdots,\x_{n+1}).$ If $T(\x_1,\cdots,\x_{n+1})\ne 0$, then we can solve $\y'=(\y_1,\cdots,\y_{n+1})$ from (\ref{eq-y}) to obtain
\begin{equation}\label{def-y}
\y'= \frac{L (\x_1,\cdots,\x_N)(\y_{n+2},\cdots,\y_N)}{T(\x_1,\cdots,\x_{n+1})},
\end{equation}
where $L (\x_1,\cdots,\x_N)$ is a linear  mapping on $ (\y_{n+2},\cdots,\y_N)$  with all coefficients being polynomials of $(\x_1,\cdots,\x_N).$ In  the same way, we have
\begin{equation}\label{def-z}
\z'= \frac{L (\x_1,\cdots,\x_N)(\z_{n+2},\cdots,\z_N)}{T(\x_1,\cdots,\x_{n+1})}.
\end{equation}
 
Let  $\mathcal V$ be the set of elements $(P,X)$ with $P =(p_{n+1},\cdots,p_N)\in (\R^2)^{N-n}$ and $X=(\x_1,\cdots,\x_N)\in (\R^{n-1})^N$  satisfying  
\begin{equation}\label{set-V}
\begin{cases} \Delta(\x_1,\cdots,\x_n) \ne 0,\\
T(\x_1,\cdots,\x_{n+1}) \ne 0,\\
p_j \ne  0 \quad\forall\, j=n+1,\cdots,N,\\
\sum\limits_{j=n+1}^N  S_{ij}(\x_1,\cdots,\x_N) p_j \ne  0 \quad  \forall\,i=1,\cdots,n,\end{cases}
\end{equation}
where $S_{ij}(\x_1,\cdots,\x_N)$  are the polynomials  defined in (\ref{p-1}). 
Since all $\Delta$, $T$, $p_j$ and $
\sum_{j=n+1}^N  S_{ij} p_j$ are nonzero polynomials in $(P,X)$, it follows  that $\mathcal V$ is a nonempty open dense set in $(\R^2)^{N-n}\times (\R^{n-1})^N.$ 

Define $\U$ to be the set of all elements 
\begin{equation}\label{U}
U= (P,\, X, \,   Y,\, Z,\, b,\, \kappa_1,\cdots,\kappa_{N}),
\end{equation}
 where $ (P,X) \in \mathcal V$,   $Y=(\y_{n+2},\cdots,\y_N)$ and $Z=(\z_{n+2},\cdots,\z_N)$ with  $\y_j,\z_j\in \R^{n-1}$ for $n+2\le j\le N$, $b\in \R^{n-2},$  and  $\kappa_i>1$ 
for $1\le i \le N.$  Then $\U$ is an open set in $\R^D,$ where the total dimension  $D$ equals
 \[
 2(N-n) +N(n-1)+2(N-n-1)(n-1)+(n-2)+N=3nN-2n^2-n.
  \]  
    
  For each $U\in\U$  given by (\ref{U}), we define  $p_i=p_i(U)=p_i(P,X)$ for $i=1, \cdots,n$ by formula (\ref{p-1}) and define
  $\y'=\y'(U)=\y'(X,Y)$ and $\z'=\z'(U)=\z'(X,Z)$ by formulas (\ref{def-y}) and (\ref{def-z}). In this way, we obtain 
\begin{equation}\label{def-p}
  p_i=p_i(U)\in\R^2\setminus\{ 0\}, \; \;  \y_i=\y_i(U)\in \R^{n-1},\;\; \z_i= \z_i(U)\in \R^{n-1}
\end{equation} 
  for all $i=1, \cdots,N.$  Also define $q(U)=q(b)=(1,1,b_1,\cdots,b_{n-2})^T\in\R^n$ if $b=(b_1,\cdots,b_{n-2})^T\in\R^{n-2}.$ Finally we define 
\begin{equation}\label{aBsg}
\begin{cases}    a_i(U)=\alpha_{r_i}(\x_i),\\ 
  B_i(U)=\beta_{r_i}(\x_i,\y_i(U),\z_i(U)),\\
  s_i(U)=a_i(U)\cdot q(b),\\
  \gamma_i(U)   = (p_i(U)\otimes a_i(U), \, s_i(U)B_i(U)), \end{cases} \; \forall\, i=1, \cdots,N.
\end{equation} 
Note that all these functions are  independent of $(\kappa_1,\cdots,\kappa_N).$ 

 Let 
 \begin{equation}\label{eta}
\begin{cases}  
 \eta_1(U)    =\kappa_1 \gamma_1(U),\\
 \eta_2(U)    = \gamma_1(U)+\kappa_2 \gamma_2(U),\\
 \qquad \vdots\\
 \eta_N(U)    = \gamma_1(U)+\gamma_2(U)+\cdots+ \gamma_{N-1}(U)+\kappa_N \gamma_N(U).\end{cases}
\end{equation}

 We summarize  the constructions above in the following result.
 
 \begin{proposition}\label{p-t} For each $U\in \U$, the $N$-tuple $(\eta_1(U),\eta_2(U),\cdots,\eta_N(U))$ is a  $\tau_N$-configuration.  Let $\eta_i(U)=(\eta_i^1(U),\eta_i^2(U))\in \R^{2\times n}\times \R^{2\times n}.$ Then each $\eta_i^1(U)$ is  linear  in  $P$ with coefficients  rational functions of  $(X,  \kappa_1,\cdots,\kappa_N),$ and each  $\eta_i^2(U)$ is  linear   in  $(Y,Z)$ with coefficients  rational functions of 
 $(X, b,\kappa_1,\cdots,\kappa_N).$  
 \end{proposition}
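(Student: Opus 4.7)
The plan is to verify Definition \ref{tau-N} directly with $\rho = 0$. The iterative form $\eta_i = \gamma_1 + \cdots + \gamma_{i-1} + \kappa_i \gamma_i$ is built into (\ref{eta}) and $\kappa_i > 1$ comes from the definition of $\U$, so the substantive work lies in checking the pointwise conditions (\ref{sum-0}). For the non-degeneracy constraints: since $\alpha_{r_i}(\x_i)$ has a $1$ in slot $r_i$ one immediately gets $|a_i| \ge 1$, and $B_i a_i = 0$ follows from $\beta_r(\x, \y, \z)\alpha_r(\x) = 0$ by construction; the non-vanishing of $p_i$ is guaranteed by (\ref{set-V}) directly for $i > n$ and, through (\ref{p-1}) together with $\Delta \ne 0$ and $\sum_j S_{ij} p_j \ne 0$, for $i \le n$.

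Next I would verify the four vanishing-sum identities in (\ref{sum-0}). The identity $\sum_i p_i \otimes a_i = 0$ is exactly the first equation of (\ref{sum-2}), solved for $(p_1, \dots, p_n)$ by (\ref{p-1}) using $\Delta \ne 0$. Splitting row-wise, $\sum_i B_i \otimes a_i = 0$ is equivalent to the pair (\ref{eq-y})--(\ref{eq-z}); after eliminating the $(1,1)$-entries via (\ref{trace}) each reduces to a square $(n^2 - 1)$-system whose coefficient matrix $A(\x_1, \dots, \x_{n+1})$ is invertible by $T \ne 0$, producing (\ref{def-y})--(\ref{def-z}). The two remaining identities $\sum_i s_i p_i = 0$ and $\sum_i s_i B_i = 0$ then come for free from $s_i = a_i \cdot q(b)$, as noted after (\ref{sum-1}): contracting the tensor $\sum_i p_i \otimes a_i = 0$ in the $a$-slot against $q(b)$ yields the first, and the analogous contraction of $\sum_i B_i \otimes a_i = 0$ yields the second.

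Finally, the parametric statements follow by inspection of (\ref{eta}). In $\eta_i^1 = \sum_{j < i} p_j \otimes a_j + \kappa_i\, p_i \otimes a_i$ each $a_j$ depends only on $X$, and each $p_j$ is either an entry of $P$ (for $j > n$) or, via (\ref{p-1}), a linear combination of entries of $P$ with coefficients $S_{ij}/\Delta$ rational in $X$; this gives the claimed linearity in $P$ with rational dependence on $(X, \kappa_1, \dots, \kappa_N)$. In $\eta_i^2$, each $s_j = a_j \cdot q(b)$ is a linear form in $b$ with $X$-polynomial coefficients that is independent of $(Y, Z)$, while $B_j$ is linear in $(\y_j, \z_j)$, and via (\ref{def-y})--(\ref{def-z}) the latter are linear in $(Y, Z)$ with $X$-rational coefficients, yielding the stated joint form. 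The step I view as the main bookkeeping obstacle is the translation of $\sum_i B_i \otimes a_i = 0$ into (\ref{eq-y})--(\ref{eq-z}) after applying (\ref{trace}), and verifying that the coefficient matrix $A$ really is common to both systems so that (\ref{def-y}) and (\ref{def-z}) share the denominator $T$; once that is in place, the rest of the proposition is direct algebraic substitution.
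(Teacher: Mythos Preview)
Your proposal is correct and matches the paper's approach: the paper presents Proposition~\ref{p-t} as a summary of the preceding constructions without a separate proof, and your write-up is precisely the explicit unpacking of those constructions (verifying (\ref{sum-0}) via (\ref{p-1}), (\ref{def-y})--(\ref{def-z}), and the contraction $s_i=a_i\cdot q(b)$, then reading off the linearity from (\ref{eta})). One cosmetic point: $s_j=a_j\cdot q(b)$ is affine rather than linear in $b$, but since only rationality in $(X,b,\kappa_1,\dots,\kappa_N)$ is claimed for the coefficients of $\eta_i^2$, this is harmless.
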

 
The following result will be useful later.

\begin{lemma}\label{rk5} Let $\zeta_i(U)=\kappa_i\gamma_i(U)$ for $i=1,\cdots,N$. Then  
\[
\rank D\zeta_i(U) \le 3n-1.
\]
\end{lemma}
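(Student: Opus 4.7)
The plan is to exhibit a smooth submanifold $V\subset\R^{2\times n}\times\R^{2\times n}$ of dimension $3n-1$ that contains the entire image of $\zeta_i$. Once this is done, for every $U\in\U$ the image of the linear map $D\zeta_i(U)$ is forced to lie in $T_{\zeta_i(U)}V$, which immediately yields $\rank D\zeta_i(U)\le 3n-1$.

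The candidate is dictated by the two structural features of $\gamma_i(U)=(p_i\otimes a_i,\,s_iB_i)$: the first slot is rank-one, and the second satisfies $(s_iB_i)a_i=0$ because $B_ia_i=0$ by (\ref{sum-1}). Multiplication by the scalar $\kappa_i$ preserves both features, so I would define
\[
V=\{(A,C)\in \R^{2\times n}\times \R^{2\times n} : \rank A=1,\; CA^T=0\},
\]
and verify the containment $\zeta_i(\U)\subset V$. The rank-one condition holds because $\kappa_i>1$, $a_i=\alpha_{r_i}(\x_i)\ne 0$, and $p_i\ne 0$ (by the defining condition of $\U$ for $i>n$, and by the nonvanishing of the numerator in (\ref{p-1}) for $i\le n$). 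The identity $CA^T=0$ is equivalent to $Ca_i=0$ whenever $A=\kappa_ip_i\otimes a_i$ with $p_i\ne 0$, and this follows directly from $B_ia_i=0$.

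The second step is the dimension count. Rank-one matrices in $\R^{2\times n}$ form a smooth manifold of dimension $n+1$, since $(p,a)\mapsto p\otimes a$ from $(\R^2\setminus 0)\times(\R^n\setminus 0)$ has one-dimensional fibers $(p,a)\sim(\lambda p,\lambda^{-1}a)$. Over each such $A=p\otimes a$ the fiber $\{C:CA^T=0\}$ coincides with $\{C:Ca=0\}$, a linear subspace of $\R^{2\times n}$ of dimension $2(n-1)$. Choosing $a$ smoothly in local charts of the rank-one stratum equips $V$ with the structure of a smooth vector bundle of total dimension $(n+1)+2(n-1)=3n-1$.

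Combining the containment $\zeta_i(\U)\subset V$ with the fact that $V$ is a smooth $(3n-1)$-dimensional manifold at every point of $\zeta_i(\U)$ gives the asserted rank bound on all of $\U$. The only real point to keep honest is the nonvanishing of $p_i(U)$ on $\U$, which is precisely the content of the last two lines of (\ref{set-V}); nothing more delicate is needed, and in particular no computation involving the explicit rational expressions of $\pi_i$ or $\eta_i$ enters the argument.
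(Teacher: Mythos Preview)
Your proof is correct and takes a genuinely different route from the paper's. The paper argues by explicit computation: it writes $\zeta_i(U)$ componentwise as a vector in $\R^{4n}$, applies elementary row operations to the $4n\times D$ Jacobian $D\zeta_i(U)$, and reduces it to a row-equivalent matrix with only $3n-1$ rows. Your argument is instead geometric: you identify an embedded $(3n-1)$-dimensional submanifold $V$ containing the image of $\zeta_i$ and read off the rank bound from $\dim T_{\zeta_i(U)}V$.

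The paper's reduction is self-contained and makes visible exactly which $3n-1$ quantities---$\x_i$, $f=\kappa_ip_i^1$, $g=\kappa_ip_i^2$, and the combinations $hD\y_i+\y_i\otimes Dh$, $hD\z_i+\z_i\otimes Dh$---carry all the information in $D\zeta_i$; this could be useful if one later wanted conditions for the rank to \emph{equal} $3n-1$. Your approach is cleaner and coordinate-free, and it makes transparent that the bound comes solely from the two structural constraints (rank-one first slot, $B_ia_i=0$) rather than from any of the explicit rational parametrizations of Section~4.1. Both arguments ultimately lean on $p_i(U)\ne 0$: the paper uses it to assume $f(U)\ne 0$ in the row reduction, and you use it to ensure $\rank A=1$ so that $V$ is smooth at the image point.
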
 
\begin{proof} Write $\zeta_i(U)\in \R^{2\times n}\times \R^{2\times n}$ as a column vector in $ \R^{4n}$:
\[
\zeta_i(U)=\begin{bmatrix} f(U)\alpha_{r_i}(\x_i)\\[1ex]
g(U)\alpha_{r_i}(\x_i)\\[1ex]
h(U)b^T_{r_i}(\x_i,\y_i(U))\\[1ex]
h(U)b^T_{r_i}(\x_i,\z_i(U))\end{bmatrix},
\]
where $f(U)=\kappa_i p_i^1(U),g(U)=\kappa_i p_i^2(U)$ and $h(U)=\kappa_i s_i(U).$   Since $p_i(U)\ne 0,$ without loss of generality,  we  assume $f(U)\ne 0.$ Note that
\[
D( f(U)\alpha^j_{r_i}(\x_i))=fD\alpha^j_{r_i}+\alpha^j_{r_i} Df \quad \forall\,j\ne r_i,
\]
\[
D( f(U)\alpha^{r_i}_{r_i}(\x_i))=  Df \quad \mbox{(since  $\alpha_{r_i}^{r_i}(\x_i)=1$).}
\]
After elementary row operations, we have the following row equivalence: 
\[
D[ f(U)\alpha_{r_i}(\x_i)] \sim  \begin{bmatrix} f(U)D\alpha'_{r_i}(\x_i)\\
Df(U)\end{bmatrix}\sim  \begin{bmatrix} f(U)D \x_i \\
Df(U)\end{bmatrix},
\]
where $\alpha_{r_i}'=(\alpha_{r_i}^j)_{j\ne r_i}.$ Similarly,
\[
D[ g(U)\alpha_{r_i}(\x_i)] \sim  \begin{bmatrix} g(U)D \x_i\\
Dg(U)\end{bmatrix}
\]
and thus 
\[
D\begin{bmatrix}  f(U)\alpha_{r_i}(\x_i) \\
g(U)\alpha_{r_i}(\x_i) \end{bmatrix}
\sim  \begin{bmatrix} f(U)D \x_i\\
Df(U)\\
g(U)D \x_i\\
Dg(U) \end{bmatrix}\sim  \begin{bmatrix} f(U)D \x_i \\
Df(U) \\ 
Dg(U) \end{bmatrix}.
\]
Note that by interchanging rows,  we have $b'_{r_i}(\x,\y)\sim \y$,  where $b'_{r_i}=(b_{r_i}^j)_{j\ne r_i}$   and $b_{r_i}^{r_i}(\x,\y)=-(\x\cdot\y).$ Thus
\[
D[h(U)b'_{r_i}(\x_i,\y_i(U))]\sim h(U)D \y_i(U)  +  \y_i(U)\otimes Dh(U),
\] 
\[
D[h(U)b^{r_i}_{r_i}(\x_i,\y_i(U))]=-h(U)D(\x_i\cdot \y_i(U)) - (\x_i\cdot \y_i(U) ) Dh(U).
\]
After elementary row operations and using $D(\x_i\cdot \y_i(U))=\x_i \cdot D\y_i(U)+\y_i(U)D\x_i,$ we have
\[
D[h(U)b^T_{r_i}(\x_i,\y_i(U))]\sim  \begin{bmatrix} h(U)D \y_i(U)  +  \y_i(U)\otimes Dh(U)\\
h(U)\y_i(U)\cdot D \x_i  \end{bmatrix}.
\]
(There is no row cancellation here!) Thus
\[
D\begin{bmatrix} h(U)b^T_{r_i}(\x_i,\y_i(U))  \\[1ex]  h(U)b^T_{r_i}(\x_i,\z_i(U))\end{bmatrix}
\sim  \begin{bmatrix} h(U)D \y_i(U)  +  \y_i(U)\otimes Dh(U)\\
h(U)\y_i(U)\cdot D \x_i \\
h(U)D \z_i(U)  +  \z_i(U)\otimes Dh(U)\\
h(U)\z_i(U)\cdot D \x_i \end{bmatrix}.
\]
Since both $h(U)\y_i(U)\cdot D \x_i $ and $h(U)\z_i(U)\cdot D \x_i$ can be cancelled by $f(U)D\x_i$, we finally have
\[
D\zeta_i(U)\sim \begin{bmatrix} f(U)D\x_i\\
Df(U) \\ 
Dg(U) \\
h(U)D \y_i(U)  +  \y_i(U)\otimes Dh(U)\\
h(U)D \z_i(U)  +  \z_i(U)\otimes Dh(U)\end{bmatrix},
\]
which has $(3n-1)$ rows. This completes the proof.
\end{proof}

\subsection{Embedding $\tau_N$-configurations onto the graph of $DF$} 

 Our goal is  to embed  a parameterized family of $\tau_N$-configurations of the form 
 \[
  (\rho+\eta_1(U), \rho+ \eta_2(U),\cdots,\rho + \eta_N(U)),
 \]
 where $\rho\in\R^{2\times n}\times \R^{2\times n}$ and $U\in \U,$ onto the graph  $\K_F$ of $DF$ for  some function $F$ of the form (\ref{F0}) with a convex $G.$ To this end, let  $\Phi(A,B)=DF(A)-B$ and we try to solve
 \[
\Psi(\rho,U):= (\Phi(\rho+\eta_1(U)), \cdots, \Phi(\rho+\eta_N(U)))=0\in\R^{2nN}.
 \]
If we would like to apply the implicit function theorem to solve for $U$ in terms of  $\rho,$ then it is necessary that 
\begin{equation}\label{N}
D=\dim \U= 2nN; \quad \mbox{thus,} \;\; N=2n+1.
\end{equation}

In what follows, let $N=2n+1$   and  we first try to embed one  $\tau_N$-configuration
$(\eta_1(U),   \eta_2(U),\cdots, \eta_N(U))$  onto  the graph $\K_F$ for some $U\in\U;$  that is, 
\begin{equation}\label{on-K}
\Phi(\eta_i(U))=0\quad \forall\, i=1,\cdots,N.
\end{equation}
 Since
$
\Phi(A,B)=\epsilon A+G_A(\tilde A) +G_J(\tilde A)DJ(A)-B,
$
where $\tilde A=(A,J(A))\in \R^{2\times n}\times \R^d,$ condition (\ref{on-K})  is equivalent to  that for  all $i=1,\cdots,N,$
\begin{equation}\label{emb-1}
G_A(\tilde  \eta_i^1(U) )=\eta_i^2(U)-\epsilon \eta_i^1(U)-G_J(\tilde  \eta_i^1(U)  ) DJ(\eta_i^1(U)),
\end{equation}
where $\tilde  \eta_i^1(U) =( \eta_i^1(U), J(\eta_i^1(U))).$ 

As in \cite{Sz04,Ya20}, we can construct  a smooth convex function $G\colon \R^{2\times n}\times \R^d\to\R$  such that 
\[
G (\tilde \eta_i^1(U) ) =c_i,\;\; G_J(\tilde \eta_i^1(U)  )=d_i,\;\; G_A(\tilde \eta_i^1(U) )=Q_i\quad\forall\,i=1,\cdots,N
\]
 {\it provided} that the quantities $\{c_i,d_i,Q_i\}_{i=1}^N$ satisfy 
\begin{equation}\label{cx-0}
c_j-c_i> \langle Q_i, \eta_j^1(U) -\eta_i^1(U) \rangle +d_i\cdot [J(\eta_j^1(U))-J(\eta_i^1(U))]
\end{equation}
 for all $1\le i\ne j\le N.$ 
 
If $Q_i= G_A(\tilde \eta_i^1(U) )$ is given by (\ref{emb-1}) with $G_J(\tilde \eta_i^1(U)  )=d_i,$ then condition (\ref{cx-0}) will be  satisfied for all sufficiently small $\epsilon>0$ {\it provided} that the following condition is satisfied:
\begin{equation}\label{emb-2}
 c_j-c_i>   \langle \eta_i^2(U), \eta_j^1(U) -\eta_i^1(U) \rangle   +d_i \cdot J(\eta_j^1(U)-\eta_i^1(U) ) \quad \forall\, i\ne j,
\end{equation}
where we have used the identity: $
J(A)-J(B) -DJ(B )(A -B)=J(A-B).$

 From the specific dependence  of $\eta_i^1(U)$ and $\eta_i^2(U)$ on $U$ as described in Proposition \ref{p-t}, we see that
\[
  \langle \eta_i^2(U), \eta_j^1(U) -\eta_i^1(U) \rangle =(Y,Z)\cdot \mathcal S_{ji}(P,U'),
\]
where  $U' =(X,b,\kappa_1,\cdots,\kappa_N)$ and $\mathcal S_{ji}(P,U')\in \R^{2[(N -n-1)(n-1)] }
 $ are linear in $P$ with coefficients being rational functions of $U'.$  Also write $J(\eta_j^1(U)-\eta_i^1(U))=J_{ji}(P,U')$ as a {\it quadratic} function in $P$ with  coefficients being rational functions of $U'.$  In this way, condition (\ref{emb-2}) becomes 
\begin{equation}\label{emb-3}
  c_i-c_j+  (Y,Z)\cdot \mathcal S_{ji}(P,U') +d_i \cdot J_{ji}(P,U') <0 \quad \forall\,   i\ne j,
\end{equation} 
which  consists of  $N(N-1)$ homogeneous linear inequalities  on the variables $\{(c_i,d_i)\}_{i=1}^N$ and  $(Y,Z).$  The total number of the variables is
\[
N(1+d)+2(N-n-1)(n-1).
\]
For example, if $n=2$ then  (\ref{emb-3}) is a $20\times 14$ system of homogeneous  linear inequalities, and if $n=3$ then  (\ref{emb-3})  is a $42\times 40$ system, both being overdetermined; however, when $n\ge 4$ the condition (\ref{emb-3})  becomes underdetermined.  In fact, we have the following result.

\begin{proposition}\label{n4} Let $n\ge 4$ and $N=2n+1.$ Given any $\{c_i\}_{i=1}^N$, there exist $d_1, \cdots, d_N$ in $\R^d$  such that condition (\ref{emb-2}) is  satisfied at some point $U_0\in \U.$  In this case, (\ref{emb-2}) is also satisfied for all $U$ in a neighborhood  $ \mathcal N(U_0)$ of $U_0$ in $\U$ by  the same $\{c_i\}_{i=1}^N$ and $\{d_i\}_{i=1}^N.$
\end{proposition}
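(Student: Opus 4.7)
The plan is to exhibit a point $U_0 \in \U$ with $Y = Z = 0$ and choose suitable $\{d_i\}_{i=1}^N$; the neighborhood part will then follow by continuity. By Proposition \ref{p-t}, each $\eta_i^2(U)$ is linear in $(Y, Z)$, so setting $Y = Z = 0$ inside $U_0$ forces $\eta_i^2(U_0) = 0$ for every $i$, which eliminates the inner-product term $\langle \eta_i^2, \eta_j^1 - \eta_i^1\rangle$ in (\ref{emb-2}). Condition (\ref{emb-2}) then decouples into $N$ independent blocks, one per $i$:
\begin{equation*}
d_i \cdot J\bigl(\eta_j^1(U_0) - \eta_i^1(U_0)\bigr) < c_j - c_i, \qquad j \neq i.
\end{equation*}
Each block has $N - 1 = 2n$ strict linear inequalities on $d_i \in \R^d$, and by Gordan's theorem of alternative it is solvable for \emph{any} right-hand side as soon as the vectors $\{J(\eta_j^1(U_0) - \eta_i^1(U_0))\}_{j \neq i}$ all lie in some open half-space of $\R^d$: one picks $e_i \in \R^d$ with $e_i \cdot J(\eta_j^1(U_0) - \eta_i^1(U_0)) < 0$ for every $j \neq i$ and takes $d_i = t_i e_i$ with $t_i > 0$ sufficiently large.

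The whole problem is therefore reduced to finding $U_0 \in \U$ with $Y = Z = 0$ such that for every $i$ the $2n$ vectors $\{J(\eta_j^1(U_0) - \eta_i^1(U_0))\}_{j \neq i}$ are positively independent in $\R^d$, and this is where the genuine work lies. When $n \geq 5$, the dimension count $d = n(n-1)/2 \geq 2n$ permits the stronger property of linear independence of these $2n$ vectors in $\R^d$; establishing it reduces to checking that a Gram-type determinant---a rational function of $(P, X, \kappa_1, \ldots, \kappa_N)$---is not identically zero on the open dense set $\mathcal V$ of (\ref{set-V}), after which any point off its zero locus will do. When $n = 4$, one has $d = 6 < 8 = N - 1$ and linear independence is impossible; instead one exhibits an explicit $U_0$ for which every difference $\eta_j^1(U_0) - \eta_i^1(U_0) \in \R^{2 \times 4}$ has all six of its $2 \times 2$ minors of one and the same sign---for instance, by arranging the $p_k$'s and rank-one pieces in a totally-positive pattern---so that all the $J(\eta_j^1 - \eta_i^1)$ lie in a single open orthant of $\R^6$ and one may take, up to an overall sign, $e_i = (1, 1, \ldots, 1)$.

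The neighborhood claim is then immediate: the finitely many inequalities in (\ref{emb-2}) are strict, and their left-hand sides depend continuously on $U \in \U$, so they persist on some open $\mathcal N(U_0) \subset \U$ with the same $\{c_i\}$ and $\{d_i\}$. The main obstacle is thus the construction for $n = 4$: although positive independence of the relevant $J$-vectors is a Zariski-open condition, one must pin down a single admissible point at which the required sign pattern of the $2 \times 2$ minors holds and at which the defining conditions of $\mathcal V$ (namely $\Delta \neq 0$, $T \neq 0$, $p_j \neq 0$, and $\sum_j S_{ij} p_j \neq 0$) are simultaneously satisfied. This is precisely the ``rational function not identically zero'' verification flagged in the introduction, and in practice is the only place where a non-trivial hand computation is unavoidable.
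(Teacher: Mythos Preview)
Your approach differs from the paper's in a key way: you set $Y=Z=0$ to kill the $\langle \eta_i^2,\cdot\rangle$ term and then solve for the $d_i$'s block by block, whereas the paper keeps $(Y,Z)$ as free unknowns alongside the $d_i$'s and solves the single affine system $(Y,Z)\cdot\mathcal S_{ji}+d_i\cdot J_{ji}=-1-c_i+c_j$ simultaneously. The payoff of the paper's route is that for every $n\ge 4$ the total number of unknowns $Nd+2(N-n-1)(n-1)$ strictly exceeds the number $N(N-1)$ of equations, so the whole argument reduces to a single claim---that the coefficient matrix $\mathcal M(P,U')$ has full row rank somewhere---with no case split between $n=4$ and $n\ge 5$.

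By discarding the $(Y,Z)$ freedom you lose exactly the slack that makes $n=4$ work. Each of your decoupled blocks is then $2n=8$ strict inequalities on $d_i\in\R^6$, so linear independence is out and you are forced to the positive-independence condition. This is where the genuine gap lies. You assert that the eight vectors $J(\eta_j^1-\eta_i^1)$ can be put in a single open orthant of $\R^6$ ``by arranging the $p_k$'s and rank-one pieces in a totally-positive pattern,'' but no such arrangement is exhibited, and it is far from clear one exists compatibly with the built-in constraint $\sum_k p_k\otimes a_k=0$ coming from (\ref{sum-1}) and the conditions defining $\mathcal V$. Also, positive independence is a \emph{semialgebraic} open condition, not a Zariski-open one, so the ``nonzero rational function on a dense open set'' heuristic you invoke does not apply here: you cannot conclude nonemptiness without an explicit witness. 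Until that witness is produced for $n=4$, the argument is incomplete; for $n\ge 5$ your linear-independence route is sound in principle but likewise rests on an unverified nonvanishing claim for the relevant Gram determinant.
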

\begin{proof}  Given  $\{c_i\}_{i=1}^N$, we solve the  linear system: 
\begin{equation}\label{emb-4}
 (Y,Z)\cdot \mathcal S_{ji}(P,U') +d_i \cdot J_{ji}(P,U' )=-1-c_i+c_j \quad \forall\,  i\ne j  
\end{equation} 
 for unknown variables  $d_1,\cdots,d_N$ and $(Y,Z).$ The total number of the unknown variables  is
\[
Nd+2(N-n-1)(n-1),
\]
 which is greater than the number $N(N-1)$ of equations if $n\ge 4;$ thus system  (\ref{emb-4}) is underdetermined. The coefficient matrix  $\mathcal M(P,U' )$  of   (\ref{emb-4}) 
has all entries being polynomials in $P$  and rational functions in $U',$ and its rows have some special forms which ensure that these rows cannot be linearly dependent for all $(P,U').$ Thus $\mathcal M(P,U' )$ will be of the full rank $N(N-1)$  for some $(P_0,U_0')$ with $(P_0,X_0) \in\mathcal V.$ Fix any such  $(P_0,U_0')$ and  the system (\ref{emb-4}) will have  solutions $\{d_i\}_{i=1}^N$ and $(Y_0,Z_0)$ that are also rational functions of $(\{c_i\}_{i=1}^N, P_0,U_0').$ Hence (\ref{emb-3}) is  satisfied by  $\{c_i\}_{i=1}^N$, $\{d_i\}_{i=1}^N$ and $(Y_0,Z_0)$ at $(P_0,U_0')$. With $(P_0,U_0',Y_0,Z_0)$ we obtain $U_0\in\U$ such that condition (\ref{emb-2}) is  satisfied at $U_0$ by  $\{c_i\}_{i=1}^N$ and $\{d_i\}_{i=1}^N.$ Clearly,  (\ref{emb-2}) is also satisfied at all $U$ in a neighborhood $\mathcal N(U_0)\subset\U $ by the same $\{c_i\}_{i=1}^N$ and $\{d_i\}_{i=1}^N.$ 
\end{proof}

\begin{remark} When $n=2$ and $N=5,$ it has been proved in \cite{Ya20} that condition (\ref{emb-2}) is  satisfied by some $\{(c_i,d_j)\}_{i=1}^5$ at some point $U_0\in \U;$  however, the proof  requires  lots of  complicated calculations using  MATLAB.  We believe that the condition (\ref{emb-2})  should be feasible  for $n=3$ as well, but, like in the case $n=2$, a proof would  involve a considerable number of necessary complicated calculations that cannot be done easily by hand. 
\end{remark}

We summarize the constructions above in the following result.

\begin{proposition}\label{pro-F0}  Suppose that condition (\ref{emb-2}) is  satisfied by $\{(c_i,d_i)\}_{i=1}^N$  at some $U_0\in\U.$ Then for some $\epsilon_0>0$ and all $\epsilon\in (0,\epsilon_0)$ there exists  a smooth convex function  $G\colon \R^{2\times n}\times \R^d\to\R$ such that $G(\tilde \eta_i^1(U_0))=c_i$,  $G_J(\tilde \eta_i^1(U_0))=d_i$ and $G_A(\tilde \eta_i^1(U_0))$ is given by (\ref{emb-1}) for $i=1,\cdots,N.$   Let
\[
F_0(A)=\frac{\epsilon}{2} |A|^2+G(A,J(A)).
\]
Then $\eta_i(U_0) \in \K_{F_0}$ and $\eta^1_i(U_0)\ne \eta_j^1(U_0)$ for all $1\le i\ne j \le N.$  
\end{proposition}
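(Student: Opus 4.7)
The plan is to reformulate the strict inequality (\ref{emb-2}) at $U_0$ as the standard convex-interpolation condition (\ref{cx-0}) for the data $\{(z_i, c_i, w_i)\}_{i=1}^N$, where $z_i := \tilde\eta_i^1(U_0)\in\R^{2\times n}\times\R^d$ and $w_i := (Q_i, d_i)$ with $Q_i := \eta_i^2(U_0) - \epsilon\eta_i^1(U_0) - d_i\cdot DJ(\eta_i^1(U_0))$ as prescribed by (\ref{emb-1}); then to produce $G$ by the classical smooth-convex interpolation construction; and finally to verify graph membership directly from (\ref{emb-1}).

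The first step is to show that (\ref{emb-2}) at $U_0$ implies (\ref{cx-0}) at $U_0$ once $\epsilon$ is small. Substituting the defining formula for $Q_i$ into the right-hand side of (\ref{cx-0}) and applying the quadratic identity $J(A)-J(B)-DJ(B)(A-B)=J(A-B)$ should yield
\begin{equation*}
\langle Q_i, \eta_j^1-\eta_i^1\rangle + d_i\cdot\bigl(J(\eta_j^1)-J(\eta_i^1)\bigr) = \langle\eta_i^2, \eta_j^1-\eta_i^1\rangle + d_i\cdot J(\eta_j^1-\eta_i^1) - \epsilon\langle\eta_i^1, \eta_j^1-\eta_i^1\rangle,
\end{equation*}
where all quantities are evaluated at $U_0$. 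The first two terms on the right reproduce the right-hand side of (\ref{emb-2}) and therefore fall strictly short of $c_j - c_i$ by some positive margin depending only on $U_0$ and the data; the remaining $\epsilon$-correction is uniformly $O(\epsilon)$ over the finite index set, so some $\epsilon_0 > 0$ exists for which (\ref{cx-0}) holds whenever $\epsilon\in(0,\epsilon_0)$.

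Next I would observe that (\ref{cx-0}) forces $z_i\ne z_j$ for $i\ne j$: if $\eta_i^1(U_0)=\eta_j^1(U_0)$ then the right-hand sides of (\ref{cx-0}) for $(i,j)$ and $(j,i)$ both vanish, giving the incompatible $c_j > c_i$ and $c_i > c_j$. With the $z_i$ distinct and the strict inequalities (\ref{cx-0}) in hand, I would set
\begin{equation*}
g(z) := \max_{1\le i\le N}\bigl\{c_i + \langle w_i, z-z_i\rangle\bigr\}.
\end{equation*}
Condition (\ref{cx-0}) ensures $g(z_i) = c_i$ and that a neighborhood $V_i$ of $z_i$ exists on which $g$ coincides with the $i$-th affine piece. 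Mollifying $g$ by a standard symmetric mollifier $\phi_\delta$ of radius $\delta < \min_i\dist(z_i,\partial V_i)$ preserves convexity and leaves affine functions unchanged, so $G := g*\phi_\delta$ is smooth and convex with $G(z_i)=c_i$ and $\nabla G(z_i) = w_i = (Q_i,d_i)$, as required.

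Finally, with $G$ in hand, $F_0(A) = \tfrac{\epsilon}{2}|A|^2 + G(A, J(A))$ gives $DF_0(\eta_i^1(U_0)) = \epsilon\eta_i^1(U_0) + Q_i + d_i\cdot DJ(\eta_i^1(U_0)) = \eta_i^2(U_0)$ by the very definition of $Q_i$ in (\ref{emb-1}), so $\eta_i(U_0)\in\K_{F_0}$; the distinctness of the $\eta_i^1(U_0)$ was already settled in the previous step. The only nonroutine ingredient is the smooth-convex interpolation, which is classical (cf.\ \cite{Sz04,Ya20}) once the strict subgradient condition (\ref{cx-0}) has been verified at finitely many points.
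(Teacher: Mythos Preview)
Your proposal is correct and follows essentially the same approach as the paper: the proposition is stated in the paper as a summary of the preceding discussion in Section~4.2, which derives (\ref{cx-0}) from (\ref{emb-2}) for small $\epsilon$ via the identity $J(A)-J(B)-DJ(B)(A-B)=J(A-B)$ and then invokes the classical smooth-convex interpolation from \cite{Sz04,Ya20}. Your write-up simply fills in the details the paper leaves to those references---the max-of-affines plus mollification construction of $G$ and the contradiction argument for $\eta_i^1(U_0)\neq\eta_j^1(U_0)$---and these are carried out correctly.
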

 
 \section{Compatibility of polyconvexity and Condition $(OC)_N.$} 
 
 In this section, we  elaborate on the construction of  function $F\colon \R^{2\times n}\to\R$ of the form
(\ref{F0}) with smooth convex $G\colon \R^{2\times n}\times \R^d\to\R$ such that $\sigma=DF$ satisfies Condition $(OC)_N.$   

In what follows, let $F_0$ be the function defined in Proposition \ref{pro-F0}. Note that $F_0$ depends on $U_0\in \U.$

\subsection{Perturbing the function $F_0$}  
Let $B_1(0)$ be the unit ball in $\R^{2\times n}$ and  $\zeta\in C^\infty_c(B_1(0);[0,1])$ be such that
 $\zeta(0)=1.$ Given $r>0$ and tensor $H=(H^{pqij})$ with $H^{pqij}=H^{ijpq}\in\R$ for all $i, p=1,2$ and $j,q=1, \cdots,n$, we define the function
\[
V_{H,r}(A)= \frac12 \zeta(A/r) \sum_{i,p=1}^2\sum_{j,q=1}^n  H^{ijpq}a_{ij}a_{pq}\quad \forall\, A=(a_{ij})\in\R^{2\times n}. 
\]
  Then $V_{H,r}\in C^\infty_c(\R^{2\times n})$ has support in $B_r(0),$ and  
\begin{equation}\label{cut-off}
\begin{cases}
V_{H,r}(0)=0,\quad 
DV_{H,r}(0)=0,\quad
D^2 V_{H,r}(0)=H,\\
|D^2 V_{H,r}(A)|\le C_0 |H| \quad \forall\, A \in\R^{2\times n},
\end{cases}
\end{equation}
where $C_0>0$ is a constant independent of $H,r.$ 
Let 
\[
r_0=\min_{i\ne j} \left \{  |\eta^1_i(U_0)-\eta_j^1(U_0)| \right \}>0
\]
 and define
\begin{equation}\label{def-F}
F(A)=F_0(A)+\sum_{j=1}^N V_{\tilde H_j,r_0}(A-\eta_j^1(U_0)) \quad \forall\, A \in\R^{2\times n},
\end{equation}
where $\tilde H_1,\cdots,\tilde H_N$  are to be chosen later.    Then
\begin{equation}\label{prop-F}
\begin{cases} 
DF(\eta_j^1(U_0))=DF_0(\eta_j^1(U_0)),\\
D^2 F(\eta_j^1(U_0))=D^2 F_0(\eta_j^1(U_0))+\tilde H_j \quad \forall\, 1\le j\le N. \end{cases}  
\end{equation}
Thus  $\eta_j(U_0) \in \K_F$ for all $j=1,\cdots,N.$ 

\begin{lemma} The function $ g(A)=\frac{\epsilon}{4}|A|^2+\sum\limits_{j=1}^N V_{\tilde H_j,r_0}(A-\eta_j^1(U_0))$   is convex provided that
\begin{equation}\label{def-F2}
\sum_{j=1}^N |\tilde H_j|<\frac{\epsilon}{2C_0}, \;\; \mbox{with $C_0$ being the constant in (\ref{cut-off}),}
\end{equation}
In this case, the function $F$ defined by  (\ref{def-F})  is of the form 
\[
F(A)=\frac{\epsilon}{4}|A|^2 + \tilde G(A, J(A)),
\]
where $\tilde G(A, J)= g(A)+G(A,J)$ is  smooth and convex on $\R^{2\times n}\times \R^d.$ 
 \end{lemma}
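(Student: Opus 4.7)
The plan is to verify convexity of $g$ by a single Hessian estimate, then derive the representation of $F$ and the convexity of $\tilde G$ as a formal consequence. The smallness condition (\ref{def-F2}) is tuned precisely to absorb the Hessians of the $N$ cut-off perturbations into the strictly positive Hessian of the quadratic $\frac{\epsilon}{4}|A|^2$.

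First I would compute, for $A, M\in \R^{2\times n}$, the quadratic form
$$D^2 g(A)[M,M] = \frac{\epsilon}{2}|M|^2 + \sum_{j=1}^N D^2V_{\tilde H_j,r_0}\bigl(A-\eta_j^1(U_0)\bigr)[M,M].$$
The pointwise bound $|D^2V_{\tilde H_j,r_0}(\cdot)|\le C_0|\tilde H_j|$ from (\ref{cut-off}), applied in each term, together with hypothesis (\ref{def-F2}), gives
$$D^2g(A)[M,M] \ge \Bigl(\frac{\epsilon}{2} - C_0\sum_{j=1}^N|\tilde H_j|\Bigr)|M|^2 > 0,$$
uniformly in $A$. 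Hence $g$ is (strictly) convex on $\R^{2\times n}$.

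Next I would verify the stated representation of $F$. Substituting $F_0(A) = \frac{\epsilon}{2}|A|^2+G(A,J(A))$ into (\ref{def-F}) and splitting $\frac{\epsilon}{2}|A|^2 = \frac{\epsilon}{4}|A|^2+\frac{\epsilon}{4}|A|^2$ yields
$$F(A) = \frac{\epsilon}{4}|A|^2 + g(A) + G(A,J(A)).$$
Setting $\tilde G(A,J) := g(A)+G(A,J)$ puts $F$ in the form $F(A) = \frac{\epsilon}{4}|A|^2+\tilde G(A,J(A))$.

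Finally, $\tilde G$ is smooth on $\R^{2\times n}\times \R^d$ as a sum of the smooth functions $V_{\tilde H_j,r_0}$, $\frac{\epsilon}{4}|A|^2$, and $G$. For joint convexity in $(A,J)$, the lift $(A,J)\mapsto g(A)$ has block-diagonal Hessian with $D^2g(A)\succeq 0$ in the $A$-block and zero in the $J$-block, so it is convex on the product space; combined with the convexity of $G$, this expresses $\tilde G$ as a sum of two convex functions and hence as convex. There is no serious obstacle in this argument: the single quantitative ingredient is the Hessian estimate provided by (\ref{cut-off}), and (\ref{def-F2}) is exactly the threshold at which the net Hessian stays positive definite.
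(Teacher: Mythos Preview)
Your proof is correct. The paper states this lemma without proof, treating the verification as routine; your argument supplies precisely the expected details---the Hessian estimate using (\ref{cut-off}) and (\ref{def-F2}) for the convexity of $g$, the algebraic splitting of $\frac{\epsilon}{2}|A|^2$ for the representation of $F$, and the observation that a convex function of $A$ lifts to a convex function of $(A,J)$ so that $\tilde G=g+G$ is a sum of convex functions.
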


Given any $(H_1^0,\cdots,H_N^0)$, let  $F$ be the function defined by  (\ref{def-F}) with 
\begin{equation}\label{def-tilde-H}
 \tilde H_j=H_j^0-D^2 F_0(\eta_j^1(U_0)) \quad\forall\,j=1,\cdots,N,
\end{equation}
and define 
\[
\Phi(\xi)=  DF(A)-B\quad \forall\, \xi=(A,B)\in \R^{2\times n}\times\R^{2\times n}.
\]
 Note that,  in terms of the Jacobian matrix, we have
\begin{equation}\label{D-phi}
D \Phi(\xi)=E+\mathcal L(D^2F(A)) \quad \forall\, \xi=(A,B),
\end{equation}
where $E$ is a constant matrix and $\mathcal L(H)$ is a matrix that is linear in $H$ with constant coefficients.  
 Let 
\[
\Psi(\rho,U)=(\Phi(\rho+\eta_1(U)), \cdots, \Phi(\rho+\eta_N(U)))
\]
for all $\rho\in \R^{2\times n}\times \R^{2\times n}$ and $U\in\U.$  Then
$
 \Psi(0,U_0)=0.
$

  The partial derivative  $\frac{\partial\Psi}{\partial U}(\rho,U),$   viewed as a $2nN\times 2nN$ matrix, has all its entries being  the set  
$
\{D\Phi(\rho+\eta_1(U)) D\eta_1(U),\cdots,D\Phi(\rho+\eta_N(U))D\eta_N(U) \}.
$ In particular, all entries of the matrix $\frac{\partial\Psi }{\partial U}(0,U_0)$ are affine in $(H_1^0,\cdots,H_N^0)$ with coefficients linear in $(D\eta_1(U_0),\cdots, D\eta_N(U_0))$ that are independent of $F.$  Therefore,  
\[
J =\det \frac{\partial\Psi }{\partial U}(0,U_0)=J(H^0_1,\cdots,H^0_N, U_0)
\]
is a polynomial in $(H_1^0,\cdots,H_N^0)$ with coefficients  being polynomials in $(D\eta_1(U_0),\cdots, D\eta_N(U_0))$ and thus rational functions of  $U_0$ that are independent of $F.$ If all these coefficient rational functions  vanish at $U_0,$ then some of them does not vanish at a point in any neighborhood of $U_0,$ and thus $J(H^0_1,\cdots,H^0_N, U_0)$ is not identically zero.   Hence we have  the following:  

\begin{lemma}  There exists $(H_1^0,\dots,H_N^0,U_0)$ such that
\begin{equation}\label{ImFT-1}
\begin{cases}  J(H^0_1,\cdots,H^0_N, U_0)\ne 0,\\
\mbox{(\ref{def-F2}) is satisfied by $\tilde H_j=H_j^0-D^2 F_0(\eta_j^1(U_0)).$} \end{cases}
 \end{equation}
\end{lemma}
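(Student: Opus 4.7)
The plan is to combine the non-identical vanishing of $J$ with a standard density argument about zero sets of polynomials. Since condition (\ref{def-F2}) is an open smallness condition on $\tilde H_j=H_j^0-D^2 F_0(\eta_j^1(U_0))$, it suffices to find $(H_1^0,\dots,H_N^0)$ arbitrarily close to the reference tensors $D^2 F_0(\eta_j^1(U_0))$ at which $J\ne 0$, for some appropriate $U_0\in\U$.

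First I would establish that $J$, viewed as a polynomial in $(H_1^0,\dots,H_N^0)$ with coefficients rational in $U_0\in\U$, is not the zero polynomial. This rests on the block structure of $\partial\Psi/\partial U(0,U_0)$ already recorded above: its $i$-th row-block is $\bigl(E+\mathcal L(H_i^0)\bigr)D\eta_i(U_0)$, which is affine in $H_i^0$ alone and involves the rows of $D\eta_i(U_0)$ controlled by Lemma \ref{rk5}. Consequently the coefficients of the monomials in $(H_1^0,\dots,H_N^0)$ are rational functions of $U_0$ that are independent of $F$; as noted in the paragraph preceding the lemma, if every such coefficient vanished identically on $\U$ then $J\equiv 0$, which the block/rank structure forbids. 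Select any $U_0\in\U$ at which some nonvanishing coefficient of $J(\cdot,U_0)$ is nonzero; such points form an open dense subset of $\U$.

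With $U_0$ fixed, the set $\mathcal Z(U_0)=\{(H_1^0,\dots,H_N^0)\colon J(H_1^0,\dots,H_N^0,U_0)=0\}$ is a proper algebraic subvariety of a finite-dimensional tensor space, and so has empty interior. For any $\epsilon\in (0,\epsilon_0)$ the open ball
\[
\mathcal B=\Bigl\{(H_1^0,\dots,H_N^0)\colon \sum_{j=1}^N \bigl|H_j^0-D^2 F_0(\eta_j^1(U_0))\bigr|<\tfrac{\epsilon}{2C_0}\Bigr\}
\]
is nonempty, so $\mathcal B\setminus \mathcal Z(U_0)$ is nonempty. Any point in $\mathcal B\setminus \mathcal Z(U_0)$ delivers $(H_1^0,\dots,H_N^0)$ with $J\ne 0$ and $\sum_j|\tilde H_j|<\epsilon/(2C_0)$, establishing (\ref{ImFT-1}).

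The main obstacle is the first step: certifying that some coefficient of $J$ as a polynomial in the $H_j^0$ is not identically zero on $\U$. One route is to specialize almost all $H_j^0$ to zero and compute the resulting minor of $D\eta_j(U_0)$ using Lemma \ref{rk5} to isolate the contributing rows, reducing the question to non-degeneracy of a smaller Jacobian built from the independent parameter slots $(P,X,Y,Z,b,\kappa_i)$. An alternative is a genericity argument: the $N=2n+1$ blocks $D\eta_i(U_0)$ depend on $U_0$ through largely disjoint coordinate groups, so for generic $U_0$ the blocks are in general position and the induced $2nN\times 2nN$ matrix is of full rank for suitable $H_j^0$. This abstract route sidesteps the explicit $2nN\times 2nN$ determinant computation that, as the author remarks, becomes impractical by hand when $n=2,3$.
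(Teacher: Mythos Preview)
Your overall strategy matches the paper's: the paragraph preceding the lemma already records that $J(H_1^0,\dots,H_N^0,U_0)$ is a polynomial in the $H_j^0$ with coefficients rational in $U_0$, notes that one may adjust $U_0$ within the neighborhood $\mathcal N(U_0)$ so that not all coefficients vanish, and then invokes the density of the nonzero set of a nonzero polynomial to pick $(H_1^0,\dots,H_N^0)$ inside the small ball dictated by (\ref{def-F2}). Your steps 2--4 reproduce this exactly.

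Where your write-up goes astray is the appeal to Lemma~\ref{rk5}. That lemma concerns $D\zeta_i(U)=D(\kappa_i\gamma_i)(U)$, not $D\eta_i(U)$, so it does not directly ``control the rows of $D\eta_i(U_0)$''. More importantly, it supplies an \emph{upper} bound $\rank D\zeta_i\le 3n-1$, which is evidence of potential degeneracy, not a tool for certifying that a $2nN\times 2nN$ determinant is nonzero. Neither of your two proposed routes in the last paragraph actually delivers the non-vanishing of $J$; the paper does not prove this point either, treating it (as announced in the introduction) as one of the ``verifications of several rational or analytic functions being not identically zero'' that ``may require some necessary calculations.'' So your proposal is aligned with the paper's argument and is correct modulo the same unverified non-vanishing hypothesis, but you should drop the claim that Lemma~\ref{rk5} or the block structure ``forbids'' $J\equiv 0$.
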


We will further adjust $(H_1^0,\dots,H_N^0,U_0)$ later, but for now we assume  that $(H_1^0,\dots,H_N^0,U_0)$   satisfies   (\ref{ImFT-1}) and  let $F$  be defined by  (\ref{def-F}).  Then  $F$ is of the form (\ref{F0})  with a smooth convex $G.$ We also have the following result.

\begin{proposition} \label{def-F4} There exists  a ball $\bar{\mathbb B}_r(0)\subset \R^{2\times n}\times \R^{2\times n}$ and a smooth  function $
U \colon \bar{\mathbb B}_r(0) \to \mathcal N(U_0),$  a neighborhood  of $U_0$ in $\U,$  such that  
 \begin{equation}\label{ImFT-4}
U(0)=U_0,\;\;  
  \Psi (\rho,U(\rho))=0, \; \;  \det \dfrac{\partial\Psi }{\partial U}(\rho,U)\ne 0   
    \end{equation}
    for all $\rho\in\bar{\mathbb B}_r(0)$ and $U\in \mathcal N(U_0).$  
    Moreover,
     \[
    DU(\rho)=-\Big [\frac{\partial \Psi}{\partial U}(\rho,U(\rho))\Big ]^{-1} \frac{\partial \Psi}{\partial \rho}(\rho,U(\rho))  \quad \forall\, \rho\in\bar{\mathbb B}_r(0);
    \]
  in particular,   
    \[
    DU(0)=-\Big [\frac{\partial \Psi}{\partial U}(0,U_0)\Big ]^{-1} \frac{\partial \Psi}{\partial \rho}(0,U_0)  
    \]
  is a rational function of $(H^0_1,\dots,H^0_N)$ with coefficients being rational functions of $U_0$ independent of $F.$ Thus we can adjust $(H_1^0,\dots,H_N^0,U_0)$ so that $DU(0)$ and thus all $DU(\rho)$ will have a full rank of $4n.$
  \end{proposition}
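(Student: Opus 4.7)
The plan is to apply the implicit function theorem to $\Psi$ at $(0,U_0)$, and then to upgrade the rank of $DU(0)$ by a polynomial nonvanishing argument combined with a small admissible adjustment of the parameters. For the first step, $\Psi(0,U_0)=0$ holds by the embedding construction of Proposition \ref{pro-F0} together with (\ref{prop-F}) and (\ref{def-tilde-H}), while $\det\frac{\partial\Psi}{\partial U}(0,U_0)=J(H_1^0,\dots,H_N^0,U_0)\ne 0$ by the first condition of (\ref{ImFT-1}). The implicit function theorem then produces $r>0$, a neighborhood $\mathcal N(U_0)\subset\U$ of $U_0$, and a smooth $U\colon\bar \B_r(0)\to\mathcal N(U_0)$ with $U(0)=U_0$ and $\Psi(\rho,U(\rho))\equiv 0$; after shrinking $r$ and $\mathcal N(U_0)$ if necessary, continuity preserves $\det\frac{\partial\Psi}{\partial U}(\rho,U)\ne 0$ on the whole product, and implicit differentiation of $\Psi(\rho,U(\rho))=0$ in $\rho$ yields the displayed formula for $DU(\rho)$.

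Next, I would derive the rational structure of $DU(0)$ by combining (\ref{D-phi}) with the identity $D^2F(\eta_j^1(U_0))=H_j^0$ coming from (\ref{prop-F}) and (\ref{def-tilde-H}). Each block row of $\frac{\partial\Psi}{\partial U}(0,U_0)$ is affine in a single $H_k^0$ with coefficients depending rationally on $U_0$ through $\eta_k(U_0)$ and $D\eta_k(U_0)$ but independent of $F$, while $\frac{\partial\Psi}{\partial\rho}(0,U_0)$ is affine in $H^0$ with purely constant coefficients. Cramer's rule then exhibits $DU(0)$ as a rational function of $(H_1^0,\dots,H_N^0)$ with coefficients rational in $U_0$ and independent of $F$, as claimed.

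For the rank upgrade, since $\frac{\partial\Psi}{\partial U}(0,U_0)$ is invertible, $\rank DU(0)=\rank\frac{\partial\Psi}{\partial\rho}(0,U_0)$. The latter is a $2nN\times 4n$ matrix whose $k$-th $2n\times 4n$ block row equals $D\Phi(\eta_k(U_0))=[\,H_k^0\,|\,-I_{2n}\,]$ by (\ref{D-phi}). Fixing any two indices $i\ne j$, the $4n\times 4n$ submatrix formed by blocks $i$ and $j$ has determinant equal, up to sign, to $\det(H_i^0-H_j^0)$, which is a polynomial in $(H_i^0,H_j^0)$ that is visibly not identically zero. Hence the full-rank condition is implied by the nonvanishing of some polynomial $R(H_1^0,\dots,H_N^0)\not\equiv 0$, and similarly $\{J\ne 0\}$ is the nonvanishing locus of a rational function not identically zero in the parameters. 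The main obstacle I anticipate is ensuring coexistence of these two genericity conditions with the smallness constraint (\ref{def-F2}); however, (\ref{def-F2}) defines a nonempty Euclidean-open neighborhood of $H_j^0=D^2F_0(\eta_j^1(U_0))$ in $H^0$-space, which necessarily meets the open dense set $\{JR\ne 0\}$, so a small adjustment of $(H_1^0,\dots,H_N^0,U_0)$ secures full rank of $DU(0)$, and hence of $DU(\rho)$ on a possibly smaller ball by continuity and the openness of the rank condition.
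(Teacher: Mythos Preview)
Your proposal is correct and follows the same approach as the paper, whose proof consists of the single sentence ``This follows from the implicit function theorem.'' You in fact go further: the paper leaves the rational structure of $DU(0)$ and the rank-upgrade step to the reader, while you supply concrete arguments --- in particular, your observation that the $k$-th block of $\frac{\partial\Psi}{\partial\rho}(0,U_0)$ is $[H_k^0\,|\,-I_{2n}]$ and that the $4n\times 4n$ minor from two blocks has determinant $\pm\det(H_i^0-H_j^0)$ gives a clean, explicit polynomial witness for full rank that the paper does not spell out.
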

  \begin{proof}
  This follows from the implicit function theorem.
\end{proof}

 \subsection{Toward the verification of Condition $(OC)_N$}   

Let   $\gamma_i(U)$ and $\eta_i(U)$   be  defined in (\ref{aBsg}) and (\ref{eta}) above,  and  let $\kappa_i(U)$ be simply the projection of $U\in \U$ to $\kappa_i$ for $i=1,\cdots,N.$  

With the function $U=U(\rho)$ defined in Proposition \ref{def-F4}, we define the following functions of $\rho\in \bar{\mathbb B}_r(0)$:
 \begin{equation}\label{prop-2}
 \begin{cases}
  \chi_i(\rho)=\frac{1}{\kappa_i(U(\rho))}, \\
  \xi_i(\rho)=\rho+\eta_i(U(\rho)), \\
 \pi_1(\rho)=\rho, \\
  \pi_{i+1}(\rho)=\pi_i(\rho)+\gamma_i(U(\rho)) \end{cases}  \forall\, i \;  \mod N.
 \end{equation}
 Clearly,  these functions  verify Property (P2) of Definition \ref{OC-N} for $\sigma=DF.$

 Since $\pi_i(\rho)=\rho+\gamma_1(U(\rho))+\cdots+\gamma_{i-1}(U(\rho))$, it follows that  
 \[
 \det D\pi_i(0)= \det [I +D(\gamma_1+\cdots+\gamma_{i-1})(U_0)DU(0)]
 \]
  is a polynomial in $(H_1^0,\cdots,H_N^0)$ with coefficients being  rational functions of  $U_0$ that are independent of $F.$ Thus $(H^0_1,\cdots,H^0_N, U_0)$ can be adjusted to satisfy 
\begin{equation}\label{pi-i}
 \det D\pi_i(0)\ne 0\quad \forall\,i=1,\cdots,N.
 \end{equation}
 We choose a further small $r>0$ such that
\begin{equation}\label{pi-i-1}
\begin{cases} \mbox{$\pi_i(\B_r(0))$ is open and $ \det D\pi_i|_{\bar\B_r(0)}\ne 0,$} \\[1ex]
\xi_i^1(\bar\B_r(0))\cap \xi_j^1(\bar\B_r(0)) =\pi_i^1(\bar\B_r(0))\cap\pi_j^1(\bar\B_r(0))=\emptyset
\end{cases}
 \end{equation}
for  all  $i\ne j \in \{1,\cdots,N\};$ this will verify Property (P1).
 
To verify Property (P3), we define, for each fixed $i=1,\cdots,N,$
\[
\begin{cases} z_i(\rho) =\zeta_i(U(\rho))=\kappa_i(U(\rho))\gamma_i(U(\rho)),\\
M_i(\rho)= (D\pi_i(\rho))^{-1}Dz_i(\rho).
\end{cases}
\]
\begin{proposition}\label{pro-n} The matrix $M_i(\rho)$  has an eigenvalue $0$ of multiplicity at least $(n+1)$ and an eigenvalue $-1$ of multiplicity at least $2n,$ and thus   has at most $(n-1)$ eigenvalues not equal to $0$ or $-1.$ We write the characteristic polynomial  of $M_i(\rho)$  as 
\[
 \det [xI-M_i(\rho)] =x^{n+1}(x+1)^{2n} Q(\rho)(x),
 \]
 where $Q(\rho)(x)= x^{n-1} + c_{1}(\rho)x^{n-2}+\cdots+c_{n-1}(\rho),$ with functions $c_j(\rho)$ being polynomials of $M_i(\rho).$   
\end{proposition}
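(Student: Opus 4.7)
The plan is to decompose the eigenvalue analysis of the $4n\times 4n$ matrix $M_i(\rho)$ into two independent nullity bounds: one producing the factor $x^{n+1}$ via Lemma \ref{rk5}, and one producing the factor $(x+1)^{2n}$ via the fact that $\xi_i$ takes values in the $2n$-dimensional graph $\K_F$. The remaining factor $Q(\rho)(x)$ is then forced by a dimension count.

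First I would chain-rule $z_i(\rho) = \zeta_i(U(\rho))$ to obtain $Dz_i(\rho) = D\zeta_i(U(\rho))\, DU(\rho)$, so that $\rank Dz_i(\rho) \le \rank D\zeta_i(U(\rho)) \le 3n - 1$ by Lemma \ref{rk5}. Since $D\pi_i(\rho)$ is invertible on $\bar{\B}_r(0)$ by (\ref{pi-i-1}), left-multiplication by $(D\pi_i(\rho))^{-1}$ preserves rank, hence $\rank M_i(\rho) \le 3n-1$ and $\dim \ker M_i(\rho) \ge n+1$. The algebraic multiplicity of the eigenvalue $0$ is therefore at least $n+1$.

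Next, the identity $\xi_i(\rho) = \pi_i(\rho) + z_i(\rho)$, which follows from $\eta_i(U) = \gamma_1(U) + \cdots + \gamma_{i-1}(U) + \zeta_i(U)$ and the definitions in (\ref{prop-2}), yields
\[
M_i(\rho) + I \;=\; (D\pi_i(\rho))^{-1}\bigl(Dz_i(\rho) + D\pi_i(\rho)\bigr) \;=\; (D\pi_i(\rho))^{-1}\, D\xi_i(\rho).
\]
Since $\xi_i$ lands in $\K_F$, a $2n$-dimensional graph over $\R^{2\times n}$, we have $\rank D\xi_i(\rho) \le 2n$, and hence $\dim \ker(M_i(\rho) + I) \ge 2n$, so the eigenvalue $-1$ has algebraic multiplicity at least $2n$. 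Because $\det[xI - M_i(\rho)]$ is a monic polynomial of degree $4n$ divisible by $x^{n+1}(x+1)^{2n}$, it factors as $x^{n+1}(x+1)^{2n}\, Q(\rho)(x)$ with $Q(\rho)(x)$ monic of degree $4n-(n+1)-2n = n-1$; the coefficients $c_j(\rho)$ are extracted by polynomial long division and are therefore polynomial expressions in the entries of $M_i(\rho)$.

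No serious obstacle is anticipated: the two divisibilities follow directly from nullity estimates together with the standard inequality that algebraic multiplicity dominates geometric multiplicity. The only mild points to verify are the invertibility of $D\pi_i(\rho)$, which has already been arranged through (\ref{pi-i-1}) so that rank statements for $Dz_i$ and $D\xi_i$ transfer cleanly to $M_i$ and $M_i+I$, and the polynomial dependence of the coefficients $c_j(\rho)$ on the entries of $M_i(\rho)$, which is a routine consequence of the long-division step.
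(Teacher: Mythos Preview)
Your proposal is correct and follows essentially the same route as the paper: the paper also bounds $\rank M_i(\rho)\le 3n-1$ via Lemma~\ref{rk5} to get the eigenvalue $0$ with multiplicity $\ge n+1$, and bounds $\dim\ker D\xi_i(\rho)\ge 2n$ to get the eigenvalue $-1$ with multiplicity $\ge 2n$, then invokes long division for the coefficients of $Q(\rho)$. The only cosmetic difference is that the paper obtains $\dim\ker D\xi_i(\rho)\ge 2n$ by differentiating the relation $\Phi(\xi_i(\rho))=0$ and using $\rank D\Phi(\xi_i(\rho))=2n$, whereas you argue directly that the image of $\xi_i$ lies in the $2n$-dimensional graph $\K_F$; these are equivalent formulations of the same fact.
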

\begin{proof}   Since $ \Phi(\pi_i(\rho)+z_i(\rho))=\Phi(\xi_i(\rho))=0$ and $\rank D\Phi(\xi_i(\rho)) =2n$   for all $\rho\in\B_r(0),$  we have $D\Phi(\xi_i(\rho))D\xi_i(\rho)= 0,$ and thus
  \[
\dim \ker (I+M_i(\rho))=  \dim \ker  D\xi_i(\rho) \ge 2n.
\]
Thus $M_i(\rho)$ has an eigenvalue $-1$ of multiplicity at least $2n.$ By Lemma \ref{rk5},
\[
\rank M_i(\rho)=\rank Dz_i(\rho)\le \rank D\zeta_i(U(\rho))\le 3n-1.
\]
Thus $\dim \ker M_i(\rho)\ge n+1$ and hence $M_i(\rho)$ has an eigenvalue $0$ of multiplicity at least $(n+1).$ Therefore, the number (counted by multiplicity) of eigenvalues of $M_i(\rho)$ other than $0$ and $-1$ is at most $4n-(n+1)-2n=n-1.$ Long divisions show that the coefficients of $Q(\rho)$ are affine functions of the coefficients of polynomial $ \det [xI-M_i(\rho)]$ and hence are polynomials of  $M_i(\rho)$.
\end{proof}

For $\rho\in\B_r(0),$ let $D(\rho)$ be the discriminant of polynomial 
\[
Q(\rho)(x)=x^{n-1} + c_1(\rho)x^{n-2}+\cdots+c_{n-1}(\rho). 
\]
 Then  $D(\rho)$ is a  polynomial  of $(c_1(\rho),\cdots,c_{n-1}(\rho))$ and thus a polynomial of  $M_i(\rho).$ Furthermore,
\[
 Q(\rho)(-1)= (-1)^{n-1} + c_1(\rho)(-1)^{n-2}+\cdots+c_{n-1}(\rho).
\]
is  a polynomial of  $M_i(\rho).$
Thus both $D(0)$and $Q(0)(-1)$ are polynomials of  $M_i(0)$ and hence are rational functions of  $(H^0_1,\cdots,H^0_N, U_0)$. Some calculations can show that these rational functions are not identically zero; thus we adjust  $(H^0_1,\cdots,H^0_N, U_0)$ to  further satisfy that
\begin{equation}\label{need-1}
D(0)\ne 0,\quad Q(0)(-1) \ne 0.
\end{equation} 

Let
\[
E_i(\rho) =\{x<-1: Q(\rho)(x)=0\} 
\]
be the set of eigenvalues of $M_i(\rho)$ less  than $-1.$ 

\begin{lemma} \label{lem-need-1} If $E_i(0) = \emptyset$, then there exists a further small $r>0$  such that $-\frac{1}{\lambda}\notin E_i(\rho)$ for all $\lambda\in (0,1)$ and $\rho\in  \bar\B_r(0).$
\end{lemma}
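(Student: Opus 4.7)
The plan is to reformulate the claim as a statement about roots of the polynomial $Q(\rho)(x)$ in the interval $(-\infty,-1)$ and to obtain the conclusion by a standard compactness and continuity argument. First, since the map $\lambda\mapsto -1/\lambda$ is a bijection from $(0,1)$ onto $(-\infty,-1)$, the desired conclusion is equivalent to the assertion that $Q(\rho)$ has no real root in $(-\infty,-1)$ for every $\rho\in\bar\B_r(0)$; in other words, $E_i(\rho)=\emptyset$ throughout $\bar\B_r(0)$.

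Next, I would invoke the nondegeneracy conditions (\ref{need-1}). Since $D(\rho)$ and the value $Q(\rho)(-1)$ both depend continuously on $\rho$ (they are polynomials of the entries of $M_i(\rho)$, which in turn depend smoothly on $\rho$), there exists $r_1>0$ such that $D(\rho)\ne 0$ and $Q(\rho)(-1)\ne 0$ for all $\rho\in\bar\B_{r_1}(0)$. In particular, on this ball, $-1$ is never a root of $Q(\rho)$, and all roots of $Q(\rho)$ are simple.

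To conclude, I argue by contradiction: suppose no such $r$ exists. Then there is a sequence $\rho_k\in\bar\B_{r_1}(0)$ with $\rho_k\to 0$ and real numbers $x_k<-1$ with $Q(\rho_k)(x_k)=0$. Since the coefficients $c_j(\rho)$ of $Q(\rho)$ are continuous, hence bounded on the compact set $\bar\B_{r_1}(0)$, the real roots of $Q(\rho)$ are uniformly bounded; passing to a subsequence, $x_k\to x_\infty\le -1$. By joint continuity of $(\rho,x)\mapsto Q(\rho)(x)$ we obtain $Q(0)(x_\infty)=0$. The case $x_\infty=-1$ is ruled out by $Q(0)(-1)\ne 0$, so $x_\infty<-1$, contradicting the hypothesis $E_i(0)=\emptyset$. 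Choosing any $r\in(0,r_1]$ that excludes such offending $\rho_k$ completes the proof.

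The argument is a routine compactness-continuity argument, so no significant obstacle arises; the essential ingredient is the nondegeneracy $Q(0)(-1)\ne 0$ already secured by (\ref{need-1}), which is precisely what prevents a root from slipping into $(-\infty,-1)$ through the boundary point $-1$ as $\rho$ varies near $0$.
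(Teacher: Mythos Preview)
Your proof is correct and follows essentially the same contradiction--compactness argument as the paper: assume a sequence $\rho_k\to 0$ with roots $x_k<-1$ of $Q(\rho_k)$, pass to a subsequential limit, and use $Q(0)(-1)\ne 0$ together with $E_i(0)=\emptyset$ to derive a contradiction. The only cosmetic difference is that you bound the $x_k$ directly via the bounded coefficients of the monic polynomial $Q(\rho)$, whereas the paper works with the compact sequence $\lambda_j\in(0,1)$ and associated unit eigenvectors of $M_i(\rho_j)$ to reach the same limiting equation; your invocation of the discriminant condition $D(\rho)\ne 0$ is harmless but not actually needed here.
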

\begin{proof} Suppose not; then there are $\lambda_j\in (0,1)$ and $\rho_j\to 0$ such that $-\frac{1}{\lambda_j}\in E_i(\rho_j).$ Thus $Q(\rho_j)(-\frac{1}{\lambda_j})=0$ and $ -\frac{1}{\lambda_j}\u_j=M_i(\rho_j)\u_j$ for some unit vector $\u_j.$ Let via a subsequence $\lambda_j\to \lambda\in [0,1]$ and  $\u_j\to \u,$  which yields $-\u=\lambda M_i(0)\u;$ thus $ \lambda \ne 0$ and $Q(0)(-\frac{1}{\lambda})=0.$ If $\lambda\ne 1$ then $-\frac{1}{\lambda}\in E_i(0),$ a contradiction. If $\lambda=1$ then $Q(0)(-1)=0,$   contradicting   (\ref{need-1}).
\end{proof}

 \begin{lemma} 
Assume 
$
E_i(0)=\{x^0_1,x^0_2, \cdots, x^0_k\} \ne\emptyset,
$
where $k\in [1, n-1]$ and $x^0_1<x^0_2<\cdots<x^0_k<-1.$  Then
\[
E_i(\rho)=\{x_1(\rho), \cdots, x_k(\rho)\},
\]
where $x_1(\rho)<\cdots<x_k(\rho)<-1$ are continuous functions on  $\bar\B_r(0)$ for a further small $r>0$  such that $x_j(0)=x^0_j$ for $j=1,\cdots,k.$ 
\end{lemma}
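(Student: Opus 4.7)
My plan is to apply the implicit function theorem root-by-root to the polynomial $Q(\rho)(x)$, using the two non-degeneracy conditions in (\ref{need-1}) to control both the simplicity of the roots and their separation from the level $-1$.

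First, since $D(0)\ne 0$, all $n-1$ roots of $Q(0)$ are simple; in particular $\partial_x Q(0)(x^0_j)\ne 0$ for each $j=1,\dots,k$. The coefficients $c_1(\rho),\dots,c_{n-1}(\rho)$ depend smoothly on $\rho$ because they are polynomials in the entries of $M_i(\rho)$, and $M_i(\rho)$ is smooth thanks to Proposition~\ref{def-F4}. Hence I would invoke the implicit function theorem at each $(0,x^0_j)$ to produce a smooth function $x_j$ defined on a neighborhood of $0$ with $x_j(0)=x^0_j$ and $Q(\rho)(x_j(\rho))\equiv 0$.

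Next I would shrink $r>0$ so that all $x_1,\dots,x_k$ are defined on $\bar\B_r(0)$ and, by continuity, the ordering $x_1(\rho)<\cdots<x_k(\rho)<-1$ is preserved; the strict inequality $x_k(\rho)<-1$ follows from $x^0_k<-1$ and continuity alone. To see that these are the \emph{only} elements of $E_i(\rho)$, I would also extend the remaining roots $x^0_{k+1},\dots,x^0_{n-1}$ of $Q(0)$ (all of which exceed $-1$, since $Q(0)(-1)\ne 0$) to continuous branches $x_{k+1}(\rho),\dots,x_{n-1}(\rho)$ via the same implicit function argument, and further shrink $r$ so that $D(\rho)\ne 0$ and $Q(\rho)(-1)\ne 0$ hold on $\bar\B_r(0)$. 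The first of these forces $Q(\rho)$ to have exactly $n-1$ simple roots on $\bar\B_r(0)$, and the second prevents any of the branches $x_{k+1}(\rho),\dots,x_{n-1}(\rho)$ from crossing the level $-1$, so they remain strictly greater than $-1$. Consequently $E_i(\rho)=\{x_1(\rho),\dots,x_k(\rho)\}$.

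There is no serious analytical obstacle: the argument is a bookkeeping exercise built on the implicit function theorem and the continuity of simple roots. The genuine work is the preceding \emph{choice} of $(H^0_1,\dots,H^0_N,U_0)$ enforcing $D(0)\ne 0$ and $Q(0)(-1)\ne 0$, i.e.\ the verification that the relevant rational functions of the parameters are not identically zero; once those nondegeneracies are in hand, the lemma follows almost immediately.
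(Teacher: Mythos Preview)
Your argument is correct and close in spirit to the paper's, but the second half (showing there are no \emph{extra} elements in $E_i(\rho)$) is organized differently. The paper proceeds by contradiction with sequences: if some $y_j\in E_i(\rho_j)\setminus\{x_1(\rho_j),\dots,x_k(\rho_j)\}$ existed along $\rho_j\to 0$, then a subsequential limit $\bar y\le -1$ would satisfy $Q(0)(\bar y)=0$; since $Q(0)(-1)\ne 0$ one has $\bar y\in E_i(0)$, and factoring $Q(\rho_j)$ shows $\bar y$ would then be a multiple zero of $Q(0)$, contradicting $D(0)\ne 0$. You instead shrink $r$ so that $D(\rho)\ne 0$ and $Q(\rho)(-1)\ne 0$ on $\bar\B_r(0)$ and track \emph{all} $n-1$ simple branches directly. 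Both reach the same conclusion from the same two nondegeneracies; your route is slightly more explicit about why no new root can enter $(-\infty,-1)$, while the paper's avoids following the extra branches at all.

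One small correction: the ``remaining roots $x^0_{k+1},\dots,x^0_{n-1}$'' need not all be real, so it is not accurate to say they ``exceed $-1$''. What you actually use is that on $\bar\B_r(0)$ the discriminant stays nonzero, so real branches stay real and complex-conjugate pairs stay complex; combined with $Q(\rho)(-1)\ne 0$, the count of real roots below $-1$ is then locally constant and equal to $k$. With that adjustment your argument goes through unchanged.
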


\begin{proof} It is well-known \cite{La} that around each simple zero $x_j^0$ of $Q(0)(x)$  there exists a simple zero $x_j(\rho)$ of $Q(\rho)(x)$, where $x_j(\rho)$ is $C^\infty$ on the coefficients of $Q(\rho)(x)$ and thus is $C^\infty$ on $\rho\in \bar\B_r(0)$ for a further small $r>0$ such that  $x_j(0)=x^0_j$ for $j=1,\cdots,k.$ This proves
\[
 \{x_1(\rho), \cdots, x_k(\rho)\} \subseteq E_i(\rho).
\]
We show that there exists a further small $r>0$ such that 
\[
E_i(\rho)= \{x_1(\rho), \cdots, x_k(\rho)\} \quad \forall\, \rho\in\bar\B_r(0).
\]
Suppose not; then there exist $\rho_j\to 0$ and $y_j\in E_i(\rho_j)$ such that $y_j\notin \{x_1(\rho), \cdots, x_k(\rho)\}.$ Since the sets $\{E_i(\rho_j)\}_{j=1}^k$ are uniformly bounded, we assume $y_j\to \bar y\le -1$ and $Q(0)(\bar y)=0.$ Thus $\bar y\ne -1$ and hence $\bar y\in E_i(0),$ say $\bar y=x_1^0.$ We write
\[
Q(\rho_j)(x)=(x-x_1(\rho_j))\cdots (x-x_k(\rho_j)) (x-y_j)(x^{n-1-k-1}+\cdots ).
\]
Thus $Q(0)(x)=(x-x^0_1 )^2\cdots (x-x^0_k)  (x^{n-1-k-1}+\cdots ),$ which implies $x_1^0$ is a zero of $Q(0)(x)$ of multiplicity at least 2, a contradiction.
\end{proof}

\begin{lemma} \label{lem-need-2}   Assume 
$
E_i(0)=\{x^0_1,x^0_2, \cdots, x^0_k\}\ne\emptyset.
$ We can further adjust $(H^0_1,\cdots,H^0_N, U_0)$ so that
\begin{equation}\label{need-2}
\adj (x_j^0I -M_i(0)) D\pi_i(0) z_i(0)\ne 0 \quad  \forall j=1,\cdots,k.
\end{equation}
\end{lemma}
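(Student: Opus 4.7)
The plan is to reduce the vector condition (\ref{need-2}) to a scalar non-vanishing condition involving left eigenvectors of $M_i(0)$, reformulate that condition as a resultant inequality, and then apply the same density-type adjustment argument that was already used to establish (\ref{need-1}).

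First I would exploit the rank-one structure of the adjugate at the eigenvalue $x_j^0$. Because $D(0)\ne 0$, the roots of $Q(0)(x)$ are simple, and combined with $Q(0)(-1)\ne 0$ and $x_j^0<-1$ this forces $x_j^0\notin\{0,-1\}$, so each $x_j^0$ is a simple eigenvalue of $M_i(0)$. Consequently $x_j^0 I-M_i(0)$ has rank $4n-1$, and the adjugate of any rank-$(d-1)$ square matrix of size $d$ has rank exactly $1$, so
\[
\adj(x_j^0 I-M_i(0))=u_j v_j^T
\]
for some nonzero right eigenvector $u_j$ and left eigenvector $v_j$ of $M_i(0)$ at $x_j^0$. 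Since $u_j\ne 0$, condition (\ref{need-2}) is equivalent to the scalar non-vanishing
\[
v_j^T D\pi_i(0) z_i(0)\ne 0\quad\forall\, j=1,\dots,k.
\]

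Next I would rephrase this without directly invoking $v_j$, which depends algebraically on the parameters through the root $x_j^0$. Consider the polynomial vector
\[
p(x):=\adj(xI-M_i(0))\,D\pi_i(0) z_i(0)\in\R[x]^{4n};
\]
its coefficients are polynomials in the entries of $M_i(0)$ and $D\pi_i(0) z_i(0)$ and hence rational functions of $(H_1^0,\dots,H_N^0,U_0)$ with coefficients independent of $F$, exactly as in Propositions~\ref{pro-n} and~\ref{def-F4}. The required condition (\ref{need-2}) becomes $p(x_j^0)\ne 0$ for every root $x_j^0$ of $Q(0)(x)$, and for this it suffices that, for some scalar component $p_\ell(x)$ of $p(x)$,
\[
\operatorname{Res}_x\bigl(Q(0)(x),\,p_\ell(x)\bigr)\ne 0,
\]
since this resultant vanishes precisely when $Q(0)(x)$ and $p_\ell(x)$ share a root. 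This resultant is again a rational function of $(H_1^0,\dots,H_N^0,U_0)$ with coefficients independent of $F$.

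Then, exactly as in the verification of (\ref{need-1}), if the resultant is not identically zero as a rational function on parameter space, the set of parameters satisfying (\ref{need-2}) is the complement of a proper analytic subvariety; intersected with the complements of the subvarieties coming from (\ref{ImFT-1}), (\ref{pi-i-1}), and (\ref{need-1}), it remains open and dense, so $(H_1^0,\dots,H_N^0,U_0)$ can be further adjusted to satisfy (\ref{need-2}) together with all previously imposed conditions. The main obstacle, as with each preceding adjustment, is the verification that this resultant is not identically zero. In keeping with the author's remarks throughout the excerpt, this ultimately reduces to exhibiting a single admissible parameter point where at least one of the resultants is nonzero, which is algorithmic in principle but may require non-trivial symbolic computation not feasible by hand.
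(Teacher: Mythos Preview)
Your argument is correct but takes a genuinely different route from the paper's. The paper proceeds more directly: since $M_i(0)$ depends rationally (hence real-analytically) on $(H_1^0,\dots,H_N^0,U_0)$, the analytic implicit function theorem makes each simple eigenvalue $x_j^0$ a real-analytic function of the parameters, so the vector $\adj(x_j^0 I - M_i(0))\,[\adj D\pi_i(0)]\,z_i(0)$ is itself real-analytic; the zero set of a nonzero real-analytic function being null then allows the adjustment. Your resultant formulation stays entirely within the rational category, eliminating the eigenvalue $x_j^0$ algebraically rather than tracking it analytically; this has the practical advantage that the scalar quantity to be checked, $\operatorname{Res}_x(Q(0),p_\ell)$, is an explicit polynomial in the entries of $M_i(0)$ and $D\pi_i(0)z_i(0)$ and hence directly amenable to symbolic computation. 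The price is that your sufficient condition is strictly stronger than (\ref{need-2}): it forbids $p_\ell$ from vanishing at \emph{every} root of $Q(0)$, not merely at those in $E_i(0)$, and it asks a single component $p_\ell$ to do the job for all $j$ simultaneously. Both arguments terminate at the same type of residual claim---that the relevant scalar is not identically zero on the parameter space---which the paper consistently flags as requiring possibly nontrivial calculation. Your rank-one adjugate step is a clean reduction, though note that once you pass to the polynomial vector $p(x)$ you no longer actually need it.
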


\begin{proof} Since $M_i(0)$ is a rational (thus analytic) function of $(H^0_1,\cdots,H^0_N, U_0)$, by the analytic implicit function theorem  (see \cite{Ka,To}) it follows that each simple eigenvalue $x_j^0$ of $M_i(0)$ is an analytic function of $(H^0_1,\cdots,H^0_N, U_0)$ and so is  the function
\[
[\adj (x_j^0I -M_i(0))] [\adj D\pi_i(0)] z_i(0).
\]
 The zero set of a nonzero analytic function is a null set. Thus we can further adjust $(H^0_1,\cdots,H^0_N, U_0)$ so that
\[
\adj (x_j^0I -M_i(0)) D\pi_i(0) z_i(0)\ne 0 \quad  \forall j=1,\cdots,k.
\]
\end{proof}
\begin{remark} If $n=2,$ when $E_i(0)\ne \emptyset$, then $E_i(\rho)=\{4+\tr M_i(\rho)\}$ is a singleton. In this case the simple eigenvalue $x^0$ of $M_i(0)$ equals $4+\tr M_i(0)$, which is a rational function of  $(H^0_1,\cdots,H^0_N, U_0).$  \end{remark}

\begin{lemma}\label{imp}  There exists a further small $r>0$  such  that, given any $i\in\{1, \cdots,N\}$, $  \lambda\in (0,1)$ and $\rho\in \bar\B_r(0)$, one has 
\[
\adj [D\pi_i(\rho)+ \lambda  D z_i(\rho)]z_i(\rho) \ne 0
\]
 whenever $\det [D\pi_i(\rho)+ \lambda  D z_i(\rho)]=0.$  
\end{lemma}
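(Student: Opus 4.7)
The plan is to reduce the singularity condition to an eigenvalue condition on $M_i(\rho)$, apply the universal factorization of the adjugate, and then invoke the nondegeneracy (\ref{need-2}) together with continuity in $\rho$ to conclude.

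First I would exploit the factorization $D\pi_i(\rho)+\lambda Dz_i(\rho)=D\pi_i(\rho)\bigl(I+\lambda M_i(\rho)\bigr)$. Since $\det D\pi_i(\rho)\ne 0$ on $\bar\B_r(0)$ by (\ref{pi-i-1}), the hypothesis $\det[D\pi_i(\rho)+\lambda Dz_i(\rho)]=0$ is equivalent to $\det(I+\lambda M_i(\rho))=0$, i.e. $-1/\lambda$ is an eigenvalue of $M_i(\rho)$; since $\lambda\in(0,1)$ forces $-1/\lambda<-1$, this lies in $E_i(\rho)$. If $E_i(0)=\emptyset$, Lemma \ref{lem-need-1} gives (after possibly shrinking $r$) that the hypothesis is never met, so the conclusion holds vacuously. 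If $E_i(0)=\{x_1^0,\dots,x_k^0\}\ne\emptyset$, the preceding continuity lemma produces continuous branches $x_j(\rho)$ on $\bar\B_r(0)$ with $x_j(0)=x_j^0$ and $E_i(\rho)=\{x_1(\rho),\dots,x_k(\rho)\}$, so $\lambda=-1/x_j(\rho)$ for some $j$.

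Next I would use the universal identity $\adj(AB)=\adj(B)\adj(A)$ and the scaling $\adj(cA)=c^{4n-1}\adj A$ (the ambient dimension being $4n$) together with $I+\lambda M_i(\rho)=x_j(\rho)^{-1}(x_j(\rho)I-M_i(\rho))$ to rewrite
\[
\adj\bigl[D\pi_i(\rho)+\lambda Dz_i(\rho)\bigr]\,z_i(\rho)
=x_j(\rho)^{-(4n-1)}\,\adj\bigl(x_j(\rho)I-M_i(\rho)\bigr)\,\adj D\pi_i(\rho)\,z_i(\rho).
\]
Since $x_j(\rho)<-1$ is bounded away from zero, the scalar factor is harmless, and the lemma reduces to showing
\[
\adj\bigl(x_j(\rho)I-M_i(\rho)\bigr)\,\adj D\pi_i(\rho)\,z_i(\rho)\ne 0
\]
for every $\rho\in\bar\B_r(0)$ and every $j$.

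At $\rho=0$ this is exactly the nonvanishing secured in the proof of Lemma \ref{lem-need-2}, namely $\adj(x_j^0I-M_i(0))\adj D\pi_i(0)\,z_i(0)\ne 0$ for every $j$. Since $x_j$, $M_i$, $D\pi_i$, and $z_i$ depend continuously on $\rho$, so does the expression above, and after one final shrinking of $r$, taken as a finite intersection over the indices $i\in\{1,\dots,N\}$ and $j\in\{1,\dots,k\}$, the nonvanishing persists on all of $\bar\B_r(0)$. The principal obstacle is essentially bookkeeping: one must invoke the universal adjugate identity at precisely the points where $I+\lambda M_i(\rho)$ is singular, and one must be careful that the branches $x_j(\rho)$ remain simple so that $\adj(x_j(\rho)I-M_i(\rho))\ne 0$ survives; but both issues are already addressed by the preparatory lemmas, leaving only the continuity argument.
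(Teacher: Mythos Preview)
Your proposal is correct and follows essentially the same route as the paper: reduce the singularity condition to $-1/\lambda\in E_i(\rho)$, dispose of the case $E_i(0)=\emptyset$ via Lemma~\ref{lem-need-1}, and in the nonempty case use the factorization $D\pi_i+\lambda Dz_i=D\pi_i(I+\lambda M_i)$ together with the adjugate identities to rewrite the quantity as a nonzero scalar times $\adj(x_j(\rho)I-M_i(\rho))\,\adj D\pi_i(\rho)\,z_i(\rho)$, which is nonzero at $\rho=0$ by (\ref{need-2}) and hence on a small ball by continuity. Your scalar factor $x_j(\rho)^{-(4n-1)}$ is exactly the paper's $(-\lambda)^{4n-1}$, since $-\lambda=1/x_j(\rho)$.
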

\begin{proof} 
Note that $\det [D\pi_i(\rho)+ \lambda  D z_i(\rho)]=0$ for $\lambda\in (0,1)$ if and only if $-\frac{1}{\lambda}\in E_i(\rho).$ If $E_i(0)=\emptyset$ for some $i,$ then by Lemma \ref{lem-need-1},  no such condition is possible for this $i$ and all $\rho\in \bar\B_r(0).$  If $E_i(0)\ne \emptyset$ for some $i$, then by (\ref{need-2}), we take a further small $r>0$  so that
\begin{equation}
[\adj (x_j(\rho)I -M_i(\rho)) ][\adj D\pi_i(\rho)] z_i(\rho)\ne 0  \quad \forall\,\rho\in \bar\B_r(0), \;  \; 1\le j\le k 
\end{equation}
for each $i$ with $E_i(0)\ne\emptyset.$ The result follows because  if $-\frac{1}{\lambda}=x_j(\rho)$ then
\[
\adj [D\pi_i(\rho)+ \lambda  D z_i(\rho)]z_i(\rho)\]
\[
= (-\lambda)^{4n-1} [\adj (x_j(\rho)I -M_i(\rho))] [\adj D\pi_i(\rho) ] z_i(\rho)\ne 0.
\]
\end{proof}

\begin{proposition} The set $\Sigma(\lambda)$ is open for all $0<\lambda\le 1.$
\end{proposition}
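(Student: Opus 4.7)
The plan: reformulate each slice as the image of a single smooth parametrization and read off openness from a submersion statement. Concretely, since $\xi_i(\rho)-\pi_i(\rho)=\kappa_i(U(\rho))\gamma_i(U(\rho))=z_i(\rho)$, one has
\[
\lambda'\xi_i(\rho)+(1-\lambda')\pi_i(\rho)=\pi_i(\rho)+\lambda'z_i(\rho)=:H_i(\rho,\lambda'),
\]
so $S_i(\lambda')=H_i(\B_r(0),\lambda')$ and $\Sigma(\lambda)=\bigcup_{i=1}^N H_i(\B_r(0)\times[0,\lambda))$. Thus it suffices to show that every $\xi_0=H_i(\rho_0,\lambda')$ with $\rho_0\in\B_r(0)$, $\lambda'\in[0,\lambda)$, $1\le i\le N$, admits a neighborhood in $\R^{4n}$ contained in $\Sigma(\lambda)$.

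The boundary level $\lambda'=0$ I would dispatch immediately: $\xi_0=\pi_i(\rho_0)$ sits in $\pi_i(\B_r(0))=S_i(0)\subseteq K(0)\subseteq \Sigma(\lambda)$, which is already open by Property (P1) together with (\ref{pi-i-1}). The substantive case is $\lambda'\in(0,\lambda)$. Here I would compute the Jacobian of the smooth map $H_i\colon \B_r(0)\times(0,\lambda)\to\R^{4n}$ at $(\rho_0,\lambda')$, which is the $4n\times(4n+1)$ block matrix
\[
DH_i(\rho_0,\lambda')=\bigl[\,D\pi_i(\rho_0)+\lambda'Dz_i(\rho_0)\ \big|\ z_i(\rho_0)\,\bigr],
\]
and argue that it has full row rank $4n$. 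If the left block is invertible this is automatic. If it is singular, Lemma \ref{imp} supplies $\adj[D\pi_i(\rho_0)+\lambda'Dz_i(\rho_0)]\,z_i(\rho_0)\ne 0$; combined with the identity $M\,\adj(M)=(\det M)I=0$, this forces $z_i(\rho_0)$ to lie outside the image of the singular $\rho$-Jacobian, so the augmented matrix is again of full row rank. The submersion theorem then delivers an open neighborhood of $\xi_0$ in $\R^{4n}$ that is covered by $H_i$-images with $\mu\in(0,\lambda)$, and hence by $\Sigma(\lambda)$.

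The main obstacle --- and the reason Lemma \ref{imp} was arranged as stated --- is exactly the singular case: along the locus where $D\pi_i+\lambda'Dz_i$ drops rank, the individual slice $S_i(\lambda')$ need not be a neighborhood of $\xi_0$, so openness cannot be extracted by freezing $\mu$; the extra $\mu$-freedom, together with the transversality of $z_i(\rho_0)$ to $\mathrm{im}(D\pi_i+\lambda'Dz_i)$ guaranteed by Lemma \ref{imp}, is what restores openness. The only remaining bookkeeping is that $\lambda'<\lambda$ leaves room for a two-sided open interval around $\lambda'$ inside $(0,\lambda)$, which is immediate.
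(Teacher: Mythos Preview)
Your argument is correct and rests on the same pivot as the paper's: Lemma \ref{imp} guarantees that at a singular level the direction $z_i(\rho_0)$ compensates for the rank drop of $D\pi_i+\lambda'Dz_i$. The paper packages this slightly differently: rather than invoking the submersion theorem on the full map $(\rho,\mu)\mapsto \pi_i(\rho)+\mu\,z_i(\rho)$, it chooses the explicit slice $\mu=\bar\lambda+(\rho-\bar\rho)\cdot\bar b$ with $\bar b=\adj(M)\,z_i(\bar\rho)$ and computes, via the rank-one determinant identity $\det(M+z\otimes\bar b)=\det M+\bar b\cdot\adj(M)z=|\bar b|^2\ne 0$, that the resulting $\rho$-derivative is invertible, then applies the implicit function theorem. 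Your submersion formulation is the coordinate-free version of exactly this computation (note that $\adj(M)z\ne 0$ forces $\rank M=4n-1$, so the augmented matrix really has rank $4n$) and is arguably cleaner. One small correction: to conclude $z_i(\rho_0)\notin\operatorname{im}(M)$ you need the identity $\adj(M)\,M=0$ (so that $\adj(M)$ annihilates the column space of $M$), not $M\,\adj(M)=0$; both hold, so the argument stands.
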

\begin{proof} 
  Let $\bar \xi\in \Sigma(\lambda).$ Then for some $i\in\{1,\cdots,N\}$, $0\le \bar\lambda<\lambda$ and  $\bar\rho\in \B_r(0)$, we have 
   \[
 \bar \xi=  \bar\lambda \xi_i(\bar\rho) +(1-\bar\lambda)\pi_i(\bar \rho)=\pi_i(\bar \rho)+\bar\lambda z_i(\bar\rho).
 \]
We show that there exists $\epsilon>0$ such that $\B_\epsilon(\bar \xi)\subset \Sigma(\lambda);$ this proves the openness of $\Sigma(\lambda).$

{\it Case 1:} Assume $\det [D\pi_i(\bar\rho)+ \bar\lambda  D z_i(\bar\rho)] \ne 0.$  Then by the inverse function theorem,  there exist $\epsilon>0$ and $s>0$ with $B_s(\bar\rho)\subset \B_r(0)$ such that
\[
\B_\epsilon(\bar \xi)\subset (\pi_i +\bar\lambda z_i)(\B_s(\bar\rho))\subset (\pi_i +\bar\lambda z_i)(\B_r(0))=S_i(\bar\lambda)\subset \Sigma(\lambda).
\] 

{\it Case 2:}  Assume $\det [D\pi_i(\bar\rho)+ \bar\lambda  D z_i(\bar\rho)] = 0.$ In this case, $\bar\lambda\ne 0;$ thus $0<\bar\lambda<\lambda.$ Consider the function 
 \[
 H(\rho,\xi)= \pi_i(\rho) + [\bar\lambda + (\rho-\bar \rho)\cdot \bar b] z_i(\rho) -\xi 
 \]
 for  all $\rho\in  \bar\B_r(0)$ and  $\xi\in\R^{2\times n}\times \R^{2\times n},$ where $ \bar b= \adj [D\pi_i(\bar\rho)+ \bar\lambda  D z_i(\bar\rho)]z_i(\bar\rho)  \ne 0 $  (by Lemma \ref{imp}). 
 Note that  $H(\bar \rho,\bar \xi)= 0,$ and 
 \[
 \frac{\partial H}{\partial  \rho} (\bar \rho,\bar \xi) =D\pi_i(\bar\rho) +\bar\lambda Dz_i(\bar\rho) +z_i(\bar \rho) \otimes \bar b.
 \]
Thus, 
\[
 \det \frac{\partial H}{\partial  \rho} (\bar \rho,\bar \xi) =\det [D\pi_i(\bar\rho) +\bar\lambda Dz_i(\bar\rho) +z_i(\bar \rho) \otimes \bar b]\]
 \[
 =\adj [D\pi_i(\bar\rho)+ \bar\lambda  D z_i(\bar\rho)]z_i(\bar\rho) \cdot \bar b = |\bar b|^2\ne 0.
\]
Hence  by the implicit function theorem, there exist balls $\B_{s}(\bar\rho)\subset \B_r(0)$ and $\B_\epsilon(\bar\xi)$ such that for all $\xi\in \B_\epsilon(\bar\xi)$ there exists a $\rho\in \B_{s}(\bar\rho)$ such that $H(\rho,\xi)=0$ and, with $s>0$ being further small,
\[
 0<\bar\lambda + (\rho-\bar \rho)\cdot \bar b<\lambda;
 \]
therefore, $\B_\epsilon(\bar\xi)\subset S_i(\bar\lambda + (\rho-\bar \rho)\cdot \bar b)\subset \Sigma(\lambda)$ also holds  in this case. 

 \end{proof}

  \begin{proposition} There exists a number $\delta_1\in (0,1)$ such that the family $\{S_i(\lambda)\}_{i=1}^N$ is disjoint and open for all $\delta_1\le  \lambda <1.$
  \end{proposition}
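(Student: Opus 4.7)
The plan is to establish openness and pairwise disjointness of the family $\{S_i(\lambda)\}_{i=1}^N$ separately, each by a continuity argument that forces $\lambda$ sufficiently close to $1$.

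For openness, observe that $S_i(\lambda)$ is the image of $\B_r(0)$ under the smooth map $\Phi_i^\lambda(\rho) := \pi_i(\rho) + \lambda z_i(\rho)$, whose Jacobian factors as
\[
D\pi_i(\rho) + \lambda Dz_i(\rho) = D\pi_i(\rho)\bigl(I + \lambda M_i(\rho)\bigr).
\]
Since $\det D\pi_i(\rho) \ne 0$ on $\bar\B_r(0)$ by (\ref{pi-i-1}), invertibility of the Jacobian reduces to that of $I + \lambda M_i(\rho)$. By Proposition~\ref{pro-n}, the spectrum of $M_i(\rho)$ consists of $0$, $-1$, and the roots of $Q(\rho)$; for $\lambda\in(0,1)$ one has $-1/\lambda < -1$, so $I+\lambda M_i(\rho)$ is singular precisely when $-1/\lambda\in E_i(\rho)$. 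Condition (\ref{need-1}) provides $Q(0)(-1)\ne 0$, and the continuity lemma for $E_i(\rho)$ together with Lemma~\ref{lem-need-1} identifies $E_i(\rho)$ (when nonempty) as finitely many continuous branches $x_j(\rho)$ on $\bar\B_r(0)$ with $x_j(\rho)<-1$ throughout. By compactness of $\bar\B_r(0)$ and finiteness of $i$,
\[
\mu := \inf\bigl\{|x| : x\in E_i(\rho),\; 1\le i\le N,\; \rho\in\bar\B_r(0)\bigr\} \in (1,+\infty],
\]
with the infimum over the empty set interpreted as $+\infty$. For any $\lambda > 1/\mu$, $-1/\lambda\notin E_i(\rho)$ for every $i$ and $\rho$, so $\Phi_i^\lambda$ is a local diffeomorphism on $\B_r(0)$ and hence $S_i(\lambda)$ is open.

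For disjointness, Property (P1) gives pairwise disjoint compact sets $\xi_i^1(\bar\B_r(0))$, so
\[
d_0 := \min_{i\ne j}\dist\bigl(\xi_i^1(\bar\B_r(0)),\,\xi_j^1(\bar\B_r(0))\bigr) > 0.
\]
Set $C := \max_i\max_{\rho\in\bar\B_r(0)}|\xi_i^1(\rho)-\pi_i^1(\rho)|$, finite by smoothness. The first component of $\lambda\xi_i(\rho)+(1-\lambda)\pi_i(\rho)$ differs from $\xi_i^1(\rho)$ by at most $(1-\lambda)C$. Hence for $\lambda > 1-d_0/(3C)$, the first-coordinate projection of $S_i(\lambda)$ lies within $d_0/3$ of $\xi_i^1(\bar\B_r(0))$; distinct projections then remain separated by at least $d_0/3>0$, forcing $S_i(\lambda)\cap S_j(\lambda)=\emptyset$ whenever $i\ne j$.

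Setting $\delta_1 := \max\{1/\mu,\,1-d_0/(3C)\} \in (0,1)$ simultaneously yields openness and pairwise disjointness on $[\delta_1,1)$. The principal technical input is the eigenvalue gap $\mu > 1$: it rests on the nonvanishing $Q(0)(-1)\ne 0$ secured in (\ref{need-1}) together with the continuity of the eigenvalue branches at the simple roots of $Q(0)$ furnished by the preceding lemmas. Once that gap is in hand, the remainder is a routine compactness argument, with no further adjustment of $(H_1^0,\ldots,H_N^0,U_0)$ required.
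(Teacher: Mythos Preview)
Your proof is correct and follows essentially the same approach as the paper: the paper defines $x_0=\max E_i(\rho)<-1$ and takes $\delta_1\in(-1/x_0,1)$, which is exactly your $1/\mu$, and then invokes (\ref{pi-i-1}) to push $\delta_1$ closer to $1$ for disjointness, which you make quantitative via the first-component distance estimate. The only cosmetic difference is that you spell out the disjointness step explicitly with $d_0$ and $C$, whereas the paper leaves it as a one-line continuity remark.
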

\begin{proof} Let
\[
x_0=\max_{1\le i\le N, \, E_i(0)\ne\emptyset, \, \rho\in \bar\B_r(0)} E_i(\rho).
\]
Then $x_0<-1.$ Let $\delta_1\in (-\frac{1}{x_0},1).$ Then $\delta_1\in (0,1).$ For all $\lambda\in [\delta_1,1),$  it follows that $-\frac{1}{\lambda}$ is not an eigenvalue of $M_i(\rho)$ for any $i$ and $\rho.$   Thus
\[
\det [D\pi_i(\rho)+ \lambda  D z_i(\rho)] =\det D\pi_i( \rho) \det [I+\lambda M_i( \rho)] \ne 0
\]
for all $ 1\le i\le N$, $\rho\in\B_r(0)$ and  $\lambda\in [\delta_1,1).$ Hence, by the inverse function theorem, $S_i(\lambda)$ is open all $\lambda\in [\delta_1,1).$  Finally, by (\ref{pi-i-1}), we can choose $\delta_1$ even closer to 1 such that $\{S_i(\lambda)\}_{i=1}^N$ is disjoint for all $ \lambda \in [\delta_1, 1).$
\end{proof}


\begin{thebibliography}{99}

  

 
\bibitem{BDM13}
 V. B\"ogelein, F. Duzaar and G. Mingione,  {\em The regularity of general parabolic systems with degenerate diffusion,} Memoirs of Amer. Math. Soc., {\bf 221} No. 1041, 2013. 

 
 
 
 
  \bibitem{BV19} 
  T. Buckmaster and V. Vicol, {\em Nonuniqueness of weak solutions to the
Navier-Stokes equation,} Ann. of Math. {\bf 189} (2019), 101--144.

 

\bibitem{CT22}   M. Colombo and R. Tione, {\em Non-classical solutions of the $p$-Laplace equation,} (January 19, 2022), available at arXiv: 2201.07484.
  
 
  
  \bibitem{CFG11}
D. Cordoba, D. Faraco and  F. Gancedo, {\em Lack of uniqueness for weak solutions of the incompressible porous media equation},  Arch. Ration. Mech. Anal. {\bf 200} (3) (2011), 725--746.


 
 
 

\bibitem{DLSz09}
{C. De Lellis and L. Sz\'ekelyhidi}, {\em The Euler equations as a differential inclusion,} Ann. of Math. {\bf 170}(3) (2009), 1417--1436.

\bibitem{DLSz17}
{C. De Lellis and L. Sz\'ekelyhidi}, {\em High dimensionality and $h$-principle in PDE,} Bull. Amer. Math. Soc. (N.S.), {\bf 54}(2) (2017), 247--282.

 
\bibitem{Is18}
P. Isett, {\em A proof of Onsager's conjecture,}  Ann. of Math. {\bf 188}(3) (2018), 871--963.

 
\bibitem{Jo23}
C. J. P. Johansson, {\em Wild solutions to the scalar Euler-Lagrange equations,} Available at arXiv:2303.07298v1 [math.AP] 13 Mar 2023

 \bibitem{Ka}
 J. Kazdan, {\em Matrices $A(t)$ depending on a parameter $t$,} Available online.
 
\bibitem{KY15}
S. Kim and B. Yan, {\em Convex integration and infinitely many weak solutions to the Perona-Malik equation in all dimensions}, SIAM J. Math. Anal. {\bf 47}(4) (2015), 2770--2794.


\bibitem{KY17}
S. Kim and B. Yan, {\em On Lipschitz solutions for some  forward-backward parabolic  equations. II: the case against Fourier}, Calc. Var. {\bf 56}(3) (2017), Art. 67, 36 pp.

\bibitem{KY18}
S. Kim and B. Yan, {\em On Lipschitz solutions for some  forward-backward parabolic  equations}, Ann. Inst. H. Poincar\'e Anal. Non Lin\'eaire {\bf 35}(1) (2018), 65-100.


 

\bibitem{KMS03} B. Kirchheim, S. M\"uller and V. \v Sver\'ak, {\em Studying nonlinear pde by geometry in matrix space,} in ``Geometric analysis and nonlinear partial differential equations", 347--395, Springer, Berlin, 2003.

 
 \bibitem{La} P. D. Lax, {\it Linear algebra and its applications,} 2nd ed. Wiley, New Jersey, 2007. 
 
\bibitem{LP17} M.  Lewicka and M. R. Pakzad, {\em 
Convex integration for the Monge--Amp\'ere equations in two dimensions,} Anal. PDE {\bf 10}(3) (2017), 695--727.

 
 

\bibitem{MRS05} S. M\"uller, M. Rieger and V. \v Sver\'ak, {\em Parabolic equations with nowhere smooth solutions}, Arch. Rational Mech. Anal.  {\bf  177}(1) (2005), 1--20. 


 
 
\bibitem{MSv03} {S. M\"uller and V. \v Sver\'ak}, {\em Convex integration for Lipschitz mappings and counterexamples to regularity},  Ann. of Math. (2) {\bf 157}(3)   (2003), 715--742.

 
  
\bibitem{Sh11}
{R. Shvydkoy}, {\em Convex integration for a class of active scalar equations}, J. Amer. Math. Soc.  {\bf 24}(4) (2011), 1159--1174.

 
  \bibitem{Sz04}
 L. Sz\'ekelyhidi, {\em The regularity of critical points of polyconvex functionals,} Arch. Rational Mech.  Anal.  {\bf 172}(1) (2004), 133--152.
 
 
 \bibitem{Ta93} L. Tartar, {\em Some remarks on separately convex functions,}  in ``Microstructure and
Phase Transitions," IMA Vol. Math. Appl. 54 (D. Kinderlehrer, R. D. James,
M. Luskin and J. L. Ericksen, eds.), Springer-Verlag, New York (1993), 191--204.

 \bibitem{To}
 J. Toland, {\em Lectures on real-analytic operator equations,} Available online.
 
 \bibitem{Ya20}
B.~Yan, {\em Convex integration for diffusion equations and Lipschitz
solutions of polyconvex gradient flows,} Calc. Var.  (2020), 59:123.

 \bibitem{Ya22}
B.~Yan, {\em On nonuniqueness and nonregularity for gradient flows of polyconvex functionals,}
Available at  arXiv:2205.05814v2  [math.AP] 16 May 2022.

   \bibitem{Zh06a}
K. Zhang, {\em Existence of infinitely many solutions for the one-dimensional Perona-Malik model}, Calc. Var. {\bf 26} (2) (2006), 171--199.

\bibitem{Zh06b}
K. Zhang, {\em On existence of weak solutions for one-dimensional forward-backward diffusion equations}, J. Differential Equations {\bf 220} (2) (2006), 322--353.




\end{thebibliography}
\end{document}